\theoremstyle{definition} 
\newtheorem{df}{Definition}[section] 
\theoremstyle{plain}            
\newtheorem{pro}[df]{Proposition}
\newtheorem{lem}[df]{Lemma}
\newtheorem{theo}[df]{Theorem}
\newtheorem{cor}[df]{Corollary}
\newcommand{\f}{\ensuremath{\varphi}}
\newcommand{\al}{\ensuremath{\alpha}}
\newcommand{\la}{\ensuremath{\lambda}}
\newcommand{\Dcal}{\ensuremath{\mathcal{D}}}
\newcommand{\Ecal}{\ensuremath{\mathcal{E}}}
\newcommand{\Mcal}{\ensuremath{\mathcal{M}}}
\newcommand{\Ncal}{\ensuremath{\mathcal{N}}}
\newcommand{\Zcal}{\ensuremath{\mathcal{Z}}}
\newcommand{\cc}{\ensuremath{\mathbb{C}}}
\newcommand{\rr}{\ensuremath{\mathbb{R}}}
\newcommand{\unit}{\ensuremath{\mathbf{1}}}
\newcommand{\norm}[1]{\ensuremath{\left\|#1\right\|}}
\newcommand{\set}[2]{\left\{#1\,\middle|\,#2 \right\}}
\newcommand{\Ca}{$C${\rm*}-algebra}      
\newcommand{\Csa}{$C${\rm*}-subalgebra}
\newcommand{\vNa}{von Neumann algebra}
\newcommand{\AW}{$AW${\rm*}-algebra}
\newcommand{\AWf}{$AW${\rm*}-factor}
\newcommand{\AWsa}{$AW${\rm*}-subalgebra}
\newcommand{\Sa}{{\rm*}-algebra}
\newcommand{\Sim}{{\rm*}-isomorphism}
\newcommand{\Sic}{{\rm*}-isomorphic}
\newcommand{\Jsi}{Jordan {\rm*}-isomor\-phism}
\newcommand{\ifff}{if and only if}
\begin{document}

\begin{center}
{\Large \bf Isomorphisms of spectral lattices}\\

{\large Martin Bohata\footnote{martin.bohata@fel.cvut.cz}\\}
         \it Department of Mathematics, Faculty of Electrical Engineering,\\
        Czech Technical University in Prague, Technick\'a 2,\\ 
        166 27 Prague 6, Czech Republic        
\end{center}

{\small \textbf{Abstract:} The paper deals with spectral order isomorphisms between certain spectral sublattices of direct sums of \AWf{}s. We prove that these maps consist of spectral order isomorphisms between spectral sublattices of individual direct summands. Consequently, we obtain a complete description of spectral order isomorphisms in the case of atomic \AW{}s. This includes the setting of matrix algebras. Moreover, we also exhibit the general form of spectral order orthoisomorphisms between various spectral sublattices of direct sums of \AWf{}s.} 

{\small \textbf{AMS Mathematics Subject Classification:} 46L40, 47B49, 06A06} 

{\small \textbf{Keywords:} \AW{}s, atomic \AW{}s, spectral order, spectral order isomorphisms, \Jsi{}s} 

%%%%%%%%%%%%%%%%%%%%%%%%%%%%%%%%%%%%%%%%%%%%%%%%%%%%%%%%%%%%%%%%%%%%%%%%
%%%%%%%%%%%%%%%%%%%%%%%%%%%%%%%%%%%%%%%%%%%%%%%%%%%%%%%%%%%%%%%%%%%%%%%%
\section{Introduction}

Let $(E^x_\la)_{\la\in\rr}$ and $(E^y_\la)_{\la\in\rr}$ be spectral families of self-adjoint elements $x$ and $y$, respectively, in an \AW{} \Mcal{}. We write $x\preceq y$ if $E^{y}_\la\leq E^x_\la$ for all $\la\in\rr$. The binary relation $\preceq$ on the self-adjoint part of \Mcal{} is a partial order called \textit{spectral order}. It was first introduced by Olson \cite{Ol71} in the setting of \vNa{}s. The self-adjoint part $\Mcal_{sa}$ of \Mcal{} endowed with the spectral order forms conditionally complete lattice, statement which can be proved in the same way as for \vNa{}s \cite{Ol71}. This result is in a strong contrast to the behavior of the standard order on self-adjoint elements \cite{Ka51,Sh51}. In the sequel, we shall call $(\Mcal_{sa},\preceq)$ the {\it spectral lattice} of \Mcal{} following the terminology introduced in \cite{dG05}. By a {\it spectral sublattice} of \Mcal{} we shall mean a sublattice of the spectral lattice of \Mcal{}. It follows from \cite[Proposition~3.4]{Bo21} that examples of proper spectral sublattices are $(\Mcal_+,\preceq)$, $(\Ecal(\Mcal),\preceq)$, and $(P(\Mcal),\preceq)$, where $\Mcal_+$ is the positive part of \Mcal{}, $\Ecal(\Mcal)$ is the set of all effects (i.e. the set of all positive elements in the unit ball of \Mcal{}), and $P(\Mcal{})$ is the set of all projections in \Mcal{}. It is easy to see that the spectral order coincides with the standard order on projections. Consequently, the spectral lattice can be regarded as a natural extension of the projection lattice of \Mcal{}.

It is worth to note that the spectral order has the following physical interpretation. Let $w((-\infty,\la],x,\f)$ be the probability that a measurement of an observable $x$ gives a value in an interval $(-\infty,\la]$ in a state $\f$ of a physical system. Then $x\preceq y$ says that $w((-\infty,\la],y,\f)\leq w((-\infty,\la],x,\f)$ for every $\la\in\rr$ and every state $\f$. Thus $x\preceq y$ means that, in every state of the physical system, the corresponding distribution functions are pointwise ordered. The interested reader can find physical applications of the spectral order, for example, in the papers \cite{DD14,Ha11,Wo14}.

Let $M$ and $N$ be subsets of self-adjoint parts of two \AW{}s. A bijection $\f:M\to N$ is called a {\it spectral order isomorphism} if, for all $x,y\in M$, $x\preceq y$ \ifff{} $\f(x)\preceq \f(y)$. This paper is devoted to the study of spectral order isomorphisms between certain spectral sublattices of \AW{}s. We continue the line of research initiated by Moln\'{a}r and \v{S}emrl in \cite{MS07}. Among other things they described the general form of spectral order automorphisms of the self-adjoint part of the \vNa{} $B(H)$ of all bounded operators on a finite-dimensional complex Hilbert space $H$ provided that $\dim H\geq 3$. The two-dimensional setting was later examined by Moln\'{a}r and Nagy in \cite{MN16}. The open problem of infinite-dimensional case has recently been solved by the author in \cite{Bo21}. More concretely, it has been shown that if \Mcal{} and \Ncal{} are \AWf{}s of Type~I (i.e. \Mcal{} and \Ncal{} are \Sic{} to $B(H)$ and $B(K)$, respectively, for some complex Hilbert spaces $H$ and $K$), then every spectral order isomorphism $\f:\Mcal_{sa}\to\Ncal_{sa}$ has the form $\f(x)=\Theta_\tau(f(x))$, where $f:\rr\to \rr$ is strictly increasing bijection, $\tau$ is an isomorphism between projection lattices, and $\Theta_\tau$ is defined by $E_\la^{\Theta_\tau(x)}=\tau(E_\la^x)$ for all $\la \in\rr$. Similar statements are also known for spectral order isomorphisms between various proper spectral sublattices of \AWf{}s of Type~I \cite{Bo21, MN16, MR21, MS07}. However, there are no results on the structure of general spectral order isomorphisms between spectral lattices (or proper spectral sublattices) going beyond \AWf{}s of Type~I. The aim of this paper is to contribute to fill that gap by investigating spectral order isomorphisms in the context of direct sums of \AWf{}s. 

A simple observation shows that the mapping $(x,y)\mapsto (x,y^3)$ is a spectral automorphism of the spectral lattice of the \AW{} $B(H)\oplus B(H)$ which has not the above canonical form $\Theta_\tau(f(x))$. On the other hand, we see that components are transformed in the canonical way. It turns out that the componentwise action of spectral isomorphisms $\Ecal(\Mcal)\to\Ecal(\Ncal)$, $\Mcal_{+}\to\Ncal_{+}$, and $\Mcal_{sa}\to\Ncal_{sa}$, where $\Mcal$ and $\Ncal$ are direct sums of \AWf{}s, is not exceptional. Indeed, we establish that such isomorphisms are determined by spectral order isomorphisms between the corresponding spectral sublattices of direct summands. This allows us to describe their general form in the case of atomic \AW{}s using results from \cite{Bo21,MN16,MS07}. In particular, we obtain a complete description of spectral order isomorphisms between spectral lattices of matrix algebras.

A spectral order isomorphism $\f:M\to N$ is called a {\it spectral order orthoisomorphism} if, for all $x,y\in M$, $xy=0$ \ifff{} $\f(x)\f(y)=0$. The structure of spectral order orthoisomorphisms is well known in the case of \AWf{}s. Let \Mcal{} and \Ncal{} be \AWf{}s not of Type~I$_2$. In \cite{HT16,HT17}, Hamhalter and Turilova obtained a description of spectral order orthoautomorphism of $\Ecal(\Mcal)$ provided that an \AWf{} \Mcal{} is not of Type~III. However, the exclusion of \AWf{}s of Type~III is not needed. In fact, it has been proved in \cite{Bo21} that every spectral order orthoisomorphism $\f:\Ecal(\Mcal)\to\Ecal(\Ncal)$ has the form $\f(x)=\psi(f(x))$ for some \Jsi{} $\psi:\Mcal\to\Ncal$ and some strictly increasing bijection $f:[0,1]\to[0,1]$. It has also been established in \cite{Bo21} that analogous theorems hold for spectral order orthoisomorphisms $\f:\Mcal_+\to\Ncal_+$ and $\f:\Mcal_{sa}\to\Ncal_{sa}$.
In this paper, we use these results to get a general form of spectral order orthoisomorphisms of spectral sublattices of direct sums of \AWf{}s which are not of Type~I$_2$.

%%%%%%%%%%%%%%%%%%%%%%%%%%%%%%%%%%%%%%%%%%%%%%%%%%%%%%%%%%%%%%%%%%%%%%%%
%%%%%%%%%%%%%%%%%%%%%%%%%%%%%%%%%%%%%%%%%%%%%%%%%%%%%%%%%%%%%%%%%%%%%%%%
\section{Preliminaries}

We start this section by recalling some basic facts about \AW{}s. For a more detailed exposition of the theory of \AW{}s, we refer the reader to the monographs \cite{Be11,SW15,SS05}. An {\it \AW{}} is a 
%unital 
\Ca{} \Mcal{} such that the following conditions hold:
\begin{enumerate}
	\item Every maximal commutative \Csa{} of \Mcal{} is a closed linear span of its projections.
	\item The set $P(\Mcal)$ of all projections equipped with the standard order is a complete lattice.
\end{enumerate}
Note that every \AW{} is unital. By the symbol $\unit_\Mcal$, we shall denote the unit of \Mcal{}. When no confusion can arise, we shall write $\unit$ in place of $\unit_\Mcal$. A \Csa{} \Ncal{} of an \AW{} \Mcal{} is called {\it \AWsa{}} of \Mcal{} if \Ncal{} is an \AW{} and the supremum of each family of orthogonal projections in \Ncal{} computed in the projection lattice of \Mcal{} is also an element of \Ncal{}. The {\it center} of an \AW{} \Mcal{} is the set 
\[
\Zcal(\Mcal)=\set{x\in\Mcal}{xy=yx \mbox{ for all }y\in \Mcal{}}.
\]
The center \Zcal(\Mcal) forms an \AWsa{} of \Mcal{}. An important class of \AW{}s consists of \vNa{}s. It was proved by Kaplansky \cite{Ka52} that an \AW{} of Type I is a \vNa{} \ifff{} its center is a \vNa{}. Consequently, \AWf{}s of Type I are nothing but von Neumann factors of Type I. On the other hand, it is well known that there are \AWf{}s which are not von Neumann factors \cite{Dy70, Ta78, Wr76}. 

Let $(\Mcal_j)_{j\in\Lambda}$ be a family of \AW{}s. Suppose that the set
$$
\bigoplus_{j\in\Lambda}\Mcal_j:=\set{(x_j)_{j\in\Lambda}}{\sup_{j\in\Lambda}\norm{x_j}<\infty}
$$
is equipped with the pointwise \Sa{} operations and the norm $\norm{(x_j)_{j\in\Lambda}}=\sup_{j\in\Lambda}\norm{x_j}$. Then it can be shown (see \cite{Be11}) that $\bigoplus_{j\in\Lambda}\Mcal_j$ forms an \AW{} called the {\it direct sum} of $(\Mcal_j)_{j\in\Lambda}$. A projection $p$ in an \AW{} is called {\it atomic}, if it has no nonzero proper subprojection. An \AW{} is said to be {\it atomic} if every nonzero projection majorizes an atomic projection. Note that each atomic \AW{} is (\Sic{} to) a direct sum of \AWf{}s of Type I. Thus atomic \AW{}s are precisely atomic \vNa{}s.

An extremally disconnected compact Hausdorff topological space is called {\it Stonean space}. It is well known that every abelian \AW{} is \Sic{} to $C(X)$, where $X$ is a Stonean space. Thus there is one-to-one correspondence between projection lattices of abelian \AW{}s and complete Boolean algebras. Accordingly, the projection lattice, $(P(\Mcal),\leq)$, of an abelian \AW{} \Mcal{} is meet-infinitely distributive (see \cite[Theorem 5.13]{Ro08}) which means that
\[
\bigwedge_{\la\in\Lambda}(p\vee q_\la)=p\vee \bigwedge_{\la\in\Lambda}q_\la
\]
for each $p\in P(\Mcal)$ and each family $(q_\la)_{\la\in\Lambda}$ in $P(\Mcal)$.

Recall that a family $(E_\la)_{\la\in\rr}$ of projections in an \AW{} $\Mcal$ is called a ({\it bounded}) {\it spectral family} if the following conditions hold:
\begin{enumerate}
	\item $E_\la\leq E_\mu$ whenever $\la\leq \mu$.
	\item $E_\la=\bigwedge_{\mu>\la}E_\mu$ for every $\la\in\rr$.
	\item There is a positive real number $\al$ such that $E_\la=0$ when $\la<-\al$ and $E_\la=\unit$ when $\la>\al$.
\end{enumerate}
It is part of the folklore of operator theory that there is a bijection between set of all spactral families in \Mcal{} and the self-adjoint part of \Mcal{}. The spectral family $(E_\la)_{\la\in\rr}$ corresponds to $x\in\Mcal_{sa}$ \ifff{} $xE_\la\leq \la E_\la$ and $\la(\unit-E_\la)\leq x(\unit-E_\la)$ for each $\la\in\rr$. In the sequel, we shall denote by $(E^x_\la)_{\al\in\rr}$ the spectral family corresponding to $x\in\Mcal_{sa}$. It turns out that $E^x_\la$ belongs to the abelian \AWsa{} of \Mcal{} generated by $\{\unit,x\}$.

The following two lemmas are well known. We present their proofs for the convenience of the reader.

\begin{lem}\label{spectral family of direct sum}
Let $(\Mcal_j)_{j\in\Lambda}$ be a family of \AW{}s and let $\Mcal=\bigoplus_{j\in\Lambda}\Mcal_j$. If $(x_j)_{j\in\Lambda}\in\Mcal_{sa}$, then $E^{(x_j)_{j\in\Lambda}}_\la=(E^{x_j}_\la)_{j\in\Lambda}$ for every $\la\in\rr$.
\end{lem}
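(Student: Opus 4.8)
The plan is to exploit the characterization of spectral families recalled just before the statement: among all spectral families in $\Mcal$, the one associated with a self-adjoint element $x$ is the unique one satisfying $xE_\la\leq\la E_\la$ and $\la(\unit-E_\la)\leq x(\unit-E_\la)$ for every $\la\in\rr$. So, writing $x=(x_j)_{j\in\Lambda}$, I would set $F_\la:=(E^{x_j}_\la)_{j\in\Lambda}$ and show (a) that $(F_\la)_{\la\in\rr}$ is a spectral family in $\Mcal$, and (b) that it satisfies the two defining inequalities with respect to $x$; the uniqueness in the characterization then forces $E^x_\la=F_\la$.

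For (a), first note that $F_\la$ is a well-defined projection in $\Mcal$: each $E^{x_j}_\la$ is a projection in $\Mcal_j$, hence $\sup_{j\in\Lambda}\norm{E^{x_j}_\la}\leq 1<\infty$, so $(E^{x_j}_\la)_{j\in\Lambda}$ lies in $\Mcal$, and it is self-adjoint and idempotent since these operations are computed coordinatewise. The elementary fact used throughout is that the order on $\Mcal$ is the coordinatewise order (an element of a direct sum is positive \ifff{} each of its coordinates is, since the spectrum of $(a_j)_{j\in\Lambda}$ is the closure of $\bigcup_j\sigma(a_j)$, and $[0,\infty)$ is closed). Granting this, monotonicity of $\la\mapsto F_\la$ follows at once from monotonicity of each $\la\mapsto E^{x_j}_\la$, and the boundedness axiom follows because $\norm{x_j}\leq\norm{x}=:\al$ for every $j$, so that $E^{x_j}_\la=0$ for $\la<-\al$ and $E^{x_j}_\la=\unit_{\Mcal_j}$ for $\la>\al$, uniformly in $j$, whence $F_\la=0$ for $\la<-\al$ and $F_\la=\unit_\Mcal$ for $\la>\al$.

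The only axiom requiring a word is right-continuity, $F_\la=\bigwedge_{\mu>\la}F_\mu$. Here I would use that in a direct sum of \AW{}s infima of families of projections are formed coordinatewise, i.e.\ $P(\Mcal)$ is, as a complete lattice, the product $\prod_{j\in\Lambda}P(\Mcal_j)$: the coordinatewise infimum is again a bounded family of projections, hence lies in $\Mcal$ and is a lower bound, and by the coordinatewise description of the order any lower bound of the family is dominated by it. Consequently $\bigwedge_{\mu>\la}F_\mu=(\bigwedge_{\mu>\la}E^{x_j}_\mu)_{j\in\Lambda}=(E^{x_j}_\la)_{j\in\Lambda}=F_\la$, using right-continuity of each $(E^{x_j}_\mu)_{\mu\in\rr}$. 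This componentwise computation of meets is the one point where a little care is needed, though it is routine.

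Finally, for (b): the $j$-th coordinate of $xF_\la-\la F_\la$ equals $x_jE^{x_j}_\la-\la E^{x_j}_\la\leq 0$, and the $j$-th coordinate of $x(\unit-F_\la)-\la(\unit-F_\la)$ equals $x_j(\unit_{\Mcal_j}-E^{x_j}_\la)-\la(\unit_{\Mcal_j}-E^{x_j}_\la)\geq 0$, both by the characterization applied inside $\Mcal_j$. Since positivity in $\Mcal$ is coordinatewise, this yields $xF_\la\leq\la F_\la$ and $\la(\unit-F_\la)\leq x(\unit-F_\la)$ for all $\la\in\rr$, and the uniqueness part of the characterization gives $E^x_\la=F_\la$, as claimed.
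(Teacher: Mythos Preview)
Your proof is correct and follows essentially the same approach as the paper's: both verify that $(F_\la)_{\la\in\rr}=((E^{x_j}_\la)_{j\in\Lambda})_{\la\in\rr}$ is a spectral family and then check the two characterizing inequalities coordinatewise, invoking uniqueness. The only difference is that the paper dismisses the spectral-family axioms with ``It is easy to see,'' whereas you spell out the details, including the coordinatewise computation of meets for right-continuity.
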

\begin{proof}
Is easy to see that that $((E^{x_j}_\la)_{j\in\Lambda})_{\la\in\rr}$ is a spectral family. Since 
$$
(x_j)_{j\in\Lambda} (E^{x_j}_\la)_{j\in\Lambda}= (x_jE^{x_j}_\la)_{j\in\Lambda}\leq \la (E^{x_j}_\la)_{j\in\Lambda}
$$ 
and 
\begin{align*}
(x_j)_{j\in\Lambda}[\unit-(E^{x_j}_\la)_{j\in\Lambda}]
&= (x_j)_{j\in\Lambda}(\unit_{\Mcal_j}-E^{x_j}_\la)_{j\in\Lambda}
 =(x_j(\unit_{\Mcal_j}-E^{x_j}_\la))_{j\in\Lambda}\\
&\geq \la (\unit_{\Mcal_j}-E^{x_j}_\la)_{j\in\Lambda}
=\la [\unit-(E^{x_j}_\la)_{j\in\Lambda}]
\end{align*}
for every $\la\in\rr$, $E^{(x_j)_{j\in\Lambda}}_\la=(E^{x_j}_\la)_{j\in\Lambda}$ for every $\la\in\rr$.
\end{proof}

\begin{lem}\label{spectral order on direct sum}
Let $(\Mcal_j)_{j\in\Lambda}$ be a family of \AW{}s and let $\Mcal=\bigoplus_{j\in\Lambda}\Mcal_j$. If $(x_j)_{j\in\Lambda},(y_j)_{j\in\Lambda}\in\Mcal_{sa}$, then $(x_j)_{j\in\Lambda}\preceq (y_j)_{j\in\Lambda}$ \ifff{} $x_j\preceq y_j$ for every $j\in\Lambda$.
\end{lem}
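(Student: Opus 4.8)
The plan is to reduce the statement to the coordinatewise description of the order on the projection lattice of a direct sum, using Lemma~\ref{spectral family of direct sum}. First I would recall the elementary fact that for projections $p,q$ in any \AW{} one has $p\leq q$ \ifff{} $pq=p$ (equivalently $qp=p$, since $p$ and $q$ are self-adjoint). Because multiplication and the \Sa{}-operations in $\Mcal=\bigoplus_{j\in\Lambda}\Mcal_j$ are defined pointwise, this immediately yields that, for projections $(p_j)_{j\in\Lambda},(q_j)_{j\in\Lambda}\in\Mcal$, the relation $(p_j)_{j\in\Lambda}\leq(q_j)_{j\in\Lambda}$ holds \ifff{} $p_jq_j=p_j$ for every $j$, i.e. \ifff{} $p_j\leq q_j$ for every $j\in\Lambda$.

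Next, by Lemma~\ref{spectral family of direct sum}, $E^{(x_j)_{j\in\Lambda}}_\la=(E^{x_j}_\la)_{j\in\Lambda}$ and $E^{(y_j)_{j\in\Lambda}}_\la=(E^{y_j}_\la)_{j\in\Lambda}$ for every $\la\in\rr$. Combining this with the coordinatewise description of the projection order above, one gets that $E^{(y_j)_{j\in\Lambda}}_\la\leq E^{(x_j)_{j\in\Lambda}}_\la$ holds \ifff{} $E^{y_j}_\la\leq E^{x_j}_\la$ for every $j\in\Lambda$. Quantifying over all $\la\in\rr$ and invoking the definition of the spectral order, this says precisely that $(x_j)_{j\in\Lambda}\preceq(y_j)_{j\in\Lambda}$ \ifff{} for every $\la\in\rr$ and every $j\in\Lambda$ one has $E^{y_j}_\la\leq E^{x_j}_\la$, which is the same as $x_j\preceq y_j$ for every $j\in\Lambda$.

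There is essentially no obstacle: the argument rests entirely on the two elementary facts that the projection order in a direct sum is coordinatewise and that spectral families in a direct sum are computed coordinatewise (Lemma~\ref{spectral family of direct sum}). The only point deserving a line of care is the harmless interchange of the two universal quantifiers, over $\la\in\rr$ and over $j\in\Lambda$, in passing from the last displayed equivalence to the conclusion.
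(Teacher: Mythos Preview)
Your argument is correct and follows exactly the same route as the paper's proof: both reduce the statement to the coordinatewise description of the projection order in a direct sum combined with Lemma~\ref{spectral family of direct sum}. The only difference is that you spell out why the projection order is coordinatewise (via $p\leq q\iff pq=p$ and pointwise multiplication), whereas the paper simply asserts this as ``easy to see''.
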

\begin{proof}
Let $\la\in\rr$. It is easy to see that $(E^{y_j}_\la)_{j\in\Lambda}\leq (E^{x_j}_\la)_{j\in\Lambda}$ \ifff{} $E^{y_j}_\la\leq E^{x_j}_\la$ for all $j\in\Lambda$. By Lemma~\ref{spectral family of direct sum}, $(x_j)_{j\in\Lambda}\preceq (y_j)_{j\in\Lambda}$ \ifff{} $x_j\preceq y_j$ for every $j\in\Lambda$.
\end{proof}

We have pointed out in the introduction that the spectral lattice of an \AW{} \Mcal{} is a conditionally complete lattice. Let us note that suprema and infima can be described in terms of spectral families as follows. Let $M$ be a nonempty set of $\Mcal_{sa}$. If $M$ is bounded above, then its supremum is a self-adjoint element with 
\[
\left(E^{\bigvee_{x\in M} x}_\la\right)_{\la\in\rr}
=\left(\bigwedge_{x\in M}E^{x}_\la\right)_{\la\in\rr}.
\]
If $M$ is bounded below, then its infimum $\bigwedge_{x\in M}x$ is a self-adjoint element with the spectral family
\[
\left(E^{\bigwedge_{x\in M} x}_\la\right)_{\la\in\rr}
=\left(\bigwedge_{\mu>\la}\bigvee_{x\in M}E^{x}_\mu\right)_{\la\in\rr}.
\]
It was established in \cite[Proposition~3.4]{Bo21} that suprema and infima of subsets of $\Ecal(\Mcal)$ considered in the spectral lattice of $\Mcal$ are the same as those computed in spectral sublattice $(\Ecal(\Mcal),\preceq)$. In addition, it was shown that similar results hold for the sublattices $(P(\Mcal),\preceq)=(P(\Mcal),\leq)$ and $(\Mcal_+,\preceq)$ as well. 

\begin{lem}\label{infimum with central projection}
If $z$ is a central projection and $x\in\Ecal(\Mcal)$, then $zx=z\wedge x$.
\end{lem}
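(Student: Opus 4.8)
The plan is to use the central projection $z$ to decompose $\Mcal$ and thereby reduce the statement to the trivial observation that the unit is the largest element of the set of effects in the spectral order. Since $z\in\Zcal(\Mcal)$ is a projection, the corners $z\Mcal$ and $(\unit-z)\Mcal$ are \AW{}s and $\Mcal$ is \Sic{} to the direct sum $z\Mcal\oplus(\unit-z)\Mcal$ (via $a\mapsto(za,(\unit-z)a)$); under this identification $x$ corresponds to $(zx,(\unit-z)x)$, $z$ to $(z,0)$ (with $z$ the unit of $z\Mcal$), and $zx$ to $(zx,0)$. First I would check that $zx$ is an effect of $z\Mcal$ and $(\unit-z)x$ an effect of $(\unit-z)\Mcal$: from $0\leq x\leq\unit$ one gets $0\leq zxz\leq z$, and $zxz=zx$ because $z$ is central, and the complementary corner is treated the same way.

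Next I would record two simple facts. First, infima in the spectral lattice of a direct sum are computed coordinatewise: if $a=(a_1,a_2),b=(b_1,b_2)\in\Mcal_{sa}$, then $(a_1\wedge b_1,a_2\wedge b_2)$ is well defined (two-element subsets of a conditionally complete lattice have an infimum) and, by Lemma~\ref{spectral order on direct sum}, it is both a lower bound of $\{a,b\}$ and $\succeq$ every lower bound of $\{a,b\}$; hence it equals $a\wedge b$. Second, for every \AW{} $\Ncal$ and every effect $e\in\Ecal(\Ncal)$ one has $0\preceq e\preceq\unit_\Ncal$: the spectrum of $e$ lies in $[0,1]$, so $E^e_\la=0$ for $\la<0$ and $E^e_\la=\unit_\Ncal$ for $\la\geq 1$, which is precisely the inequality $E^{\unit_\Ncal}_\la\leq E^e_\la\leq E^0_\la$ for all $\la\in\rr$; in particular $\unit_\Ncal\wedge e=e$ and $0\wedge e=0$ in $(\Ncal_{sa},\preceq)$. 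Combining these two facts with the decomposition above yields $z\wedge x=(z\wedge zx,\,0\wedge(\unit-z)x)=(zx,0)=zx$ in $(\Mcal_{sa},\preceq)$; and since $z$, $x$ and $zx$ are all effects, by \cite[Proposition~3.4]{Bo21} the same identity holds in $(\Ecal(\Mcal),\preceq)$.

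This is an easy lemma, so I do not expect a genuine obstacle; the only points deserving a word of justification are that the corners by $z$ and by $\unit-z$ are again \AW{}s (so that Lemma~\ref{spectral order on direct sum} applies to $z\Mcal\oplus(\unit-z)\Mcal$) and that the infimum is unchanged when passing between $(\Mcal_{sa},\preceq)$ and $(\Ecal(\Mcal),\preceq)$ — both accounted for above. Should one prefer to avoid the direct-sum reduction, the identity also follows by a direct computation with spectral families: using Lemma~\ref{spectral family of direct sum}, the displayed infimum formula for $(\Mcal_{sa},\preceq)$, the identity $E^z_\mu\vee E^x_\mu=(\unit-z)+zE^x_\mu$ for $0\leq\mu<1$ (valid since $\unit-z$ is central), and right-continuity of $(E^x_\la)_{\la\in\rr}$, one checks that $E^{zx}_\la$ and $E^{z\wedge x}_\la$ both equal $0$ for $\la<0$, equal $(\unit-z)+zE^x_\la$ for $0\leq\la<1$, and equal $\unit$ for $\la\geq 1$.
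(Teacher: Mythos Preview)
Your proof is correct, but it follows a different path from the paper's. The paper does not decompose $\Mcal$ via $z$; instead it passes to the abelian \AWsa{} $\Ncal$ generated by $\{z,x,\unit\}$, realizes $\Ncal\simeq C(X)$ for a Stonean space $X$, observes the pointwise identity $\chi_O f=\chi_O\wedge f$ for $f\in C(X)$ with $0\leq f\leq 1$ and $\chi_O$ a clopen indicator, and then invokes \cite[Proposition~3.3]{Bo21} to transfer the infimum from $\Ncal$ back to the spectral lattice of $\Mcal$. Your argument trades the Gelfand picture and the external Proposition~3.3 for the direct-sum decomposition and the paper's own Lemma~\ref{spectral order on direct sum}, which makes it more internal to the present note and arguably a bit more elementary. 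The paper's route, on the other hand, only uses that $z$ commutes with $x$ (centrality enters solely to guarantee that $\Ncal$ is abelian), so it would adapt verbatim to any projection commuting with $x$, whereas your decomposition genuinely needs $z$ to be central. Your final remark about the infimum agreeing in $(\Mcal_{sa},\preceq)$ and $(\Ecal(\Mcal),\preceq)$ is fine but not strictly needed here: the paper states and uses the identity in the full spectral lattice.
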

	\begin{proof}
Consider an abelian \AWsa{} \Ncal{} of \Mcal{} generated by $\{z,x,\unit\}$. Then $\Ncal\simeq C(X)$, where $X$ is a Stonean space. Let $f\in C(X)$ correspond to $x$. Since $x\in\Ecal(\Mcal)$, $0\leq f(t)\leq 1$ for all $t\in X$. Furthermore, there is a clopen set $O\subset X$ such that $\chi_O$ corresponds to $z$ because $z$ is a projection. The spectral order $\preceq$ coincides with $\leq$ on abelian algebras. It is easy to see that $\chi_O\wedge f=\chi_O f$. Thus $z\wedge x=zx$ in the spectral lattice of \Ncal{} because a \Sim{} is a spectral order isomorphism. According to \cite[Proposition~3.3]{Bo21}, the infimum in the spectral lattice of \Ncal{} coincides with the infimum in the spectral lattice of \Mcal{} and so $z\wedge x=zx$ in the spectral lattice of \Mcal{}.
	\end{proof}

\begin{lem}\label{supremum and multiplication}
	Let $(z_j)_{j\in\Lambda}$ be a family of mutually orthogonal central projections in an \AW{} \Mcal{}. If $x\in \Mcal_+$, then 
	$$
	\bigvee_{j\in\Lambda}z_j x=\left(\bigvee_{j\in\Lambda}z_j\right) x.
	$$
\end{lem}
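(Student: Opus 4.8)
The plan is to compare the two sides through their spectral families, via Lemma~\ref{spectral family of direct sum} and the description of suprema in the spectral lattice recalled above. Write $z=\bigvee_{j\in\Lambda}z_j$. The starting point is the spectral family of $wx$ for a single central projection $w$: decomposing $\Mcal\simeq w\Mcal\oplus(\unit-w)\Mcal$ and applying Lemma~\ref{spectral family of direct sum} to $wx=(wx,0)$, one obtains
\[
E^{wx}_\la=\unit-w(\unit-E^x_\la)\quad(\la\ge 0),\qquad E^{wx}_\la=0\quad(\la<0),
\]
the second line because $wx\ge 0$.

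From this, since $z_j\le z$ are central the projection $z_j(\unit-E^x_\la)$ is dominated by $z(\unit-E^x_\la)$, so $E^{zx}_\la\le E^{z_jx}_\la$ for every $\la$; hence $z_jx\preceq zx$, the supremum $\bigvee_{j}z_jx$ exists, and $\bigvee_{j}z_jx\preceq zx$. The content of the lemma is therefore the reverse inequality, i.e. $E^{\bigvee_{j}z_jx}_\la\le E^{zx}_\la$ for all $\la$. By the supremum formula $E^{\bigvee_{j}z_jx}_\la=\bigwedge_{j}E^{z_jx}_\la$; writing $q=\unit-E^x_\la$ and applying de Morgan's law in the complete projection lattice, this inequality becomes
\[
zq\le\bigvee_{j\in\Lambda}z_jq\qquad(\la\ge 0),
\]
the case $\la<0$ being trivial. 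Thus the statement about positive elements reduces to one purely about the projection lattice.

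It therefore remains to prove: for mutually orthogonal central projections $(z_j)$ with supremum $z$ and an arbitrary projection $q\in P(\Mcal)$, one has $\bigvee_{j}z_jq=zq$; here $\le$ is immediate since each $z_jq$ is a projection below both $z$ and $q$. I expect this to be the crux. Suppose $s:=zq-\bigvee_{j}z_jq\ne 0$; then $s$ is a nonzero projection with $s\le z$, $s\le q$, and $s$ orthogonal to $r:=\bigvee_{j}z_jq$. Since $0\ne s\le z=\bigvee_{j}z_j$ and the $z_j$ are central, some $z_js\ne 0$: otherwise $z_j\le\unit-s$ for every $j$, hence $z\le\unit-s$, and then $s\le z\le\unit-s$ forces $s=0$. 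For such a $j$, $z_js$ is a projection below both $z_j$ and $q$, hence $z_js\le z_jq\le r$; but $z_js\le s$ and $r\perp s$ give $z_js\le r\wedge s=0$, a contradiction. So $s=0$, completing the proof.

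The only genuine obstacle is this last projection-lattice identity. The projection lattice of an \AW{} is not distributive in general, so infinite distributivity is unavailable; one must instead exploit the centrality of the $z_j$ --- both to guarantee that each $z_jq$ is a projection and, decisively, in the ``locating'' step asserting that $0\ne s\le\bigvee_{j}z_j$ forces $z_js\ne 0$ for some $j$. Everything else is routine manipulation of spectral families; in particular no passage to effects is needed, the argument applying directly to $x\in\Mcal_+$.
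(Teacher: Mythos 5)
Your proof is correct, but it follows a genuinely different route from the paper's. The paper first rescales to reduce to $x\in\Ecal(\Mcal)$, invokes Lemma~\ref{infimum with central projection} to replace $z_jx$ by $z_j\wedge x$, passes to the spectral-family formulas for lattice operations, and then appeals to the meet-infinite distributivity of the projection lattice of the abelian \AWsa{} generated by all the relevant spectral projections (i.e.\ the cited fact about complete Boolean algebras). You instead stay with $x\in\Mcal_+$ throughout, compute $E^{z_jx}_\la=\unit-z_j(\unit-E^x_\la)$ directly from Lemma~\ref{spectral family of direct sum}, and reduce everything to the single projection-lattice identity $\bigl(\bigvee_j z_j\bigr)\wedge q=\bigvee_j(z_j\wedge q)$ for central $z_j$, which you then prove by the elementary ``locate a nonzero central piece of the defect'' argument rather than by citing distributivity. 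Both proofs are sound; the paper's is shorter given the machinery already assembled (Lemma~\ref{infimum with central projection} and the Boolean-algebra distributivity theorem), while yours is more self-contained, avoids the detour through effects and the external distributivity citation, and in fact establishes the key lattice identity for an arbitrary (not necessarily orthogonal) family of central projections. One small point worth making explicit if you write this up: the identification $E^{wx}_\la=\unit-w(\unit-E^x_\la)$ for $\la\ge 0$ deserves a line of verification (the defining inequalities $wx\,E\le\la E$ and $\la(\unit-E)\le wx(\unit-E)$, plus right-continuity inside the abelian subalgebra generated by $w$ and the $E^x_\mu$), but this is routine and at the same level of detail the paper itself allows.
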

\begin{proof}
	First assume that $x\in\Ecal(\Mcal)$. By Lemma~\ref{infimum with central projection},
	$$
	E^{\bigvee_{j\in\Lambda}z_j x}_\la
	=\bigwedge_{j\in\Lambda} E^{z_j x}_\la
	=\bigwedge_{j\in\Lambda} E^{z_j\wedge x}_\la
	=\bigwedge_{j\in\Lambda}\bigwedge_{\mu>\la} (E^{z_j}_\mu\vee E^{x}_\mu)
	=\bigwedge_{\mu>\la}\bigwedge_{j\in\Lambda} (E^{z_j}_\mu\vee E^{x}_\mu).
	$$	
	Since the \AWsa{} of \Mcal{} generated by 
	$$
	\set{E^{z_j}_\la}{j\in\Lambda, \la\in\rr}\cup\set{E^x_\la}{\la\in\rr}
	$$
	is abelian,
	\begin{align*}
		E^{\bigvee_{j\in\Lambda}z_j x}_\la
		&=\bigwedge_{\mu>\la} \left[\left(\bigwedge_{j\in\Lambda} E^{z_j}_\mu\right)\vee E^{x}_\mu\right]
		=\bigwedge_{\mu>\la} \left[E^{\bigvee_{j\in\Lambda} z_j}_\mu\vee E^{x}_\mu\right]\\
		&=E^{\left(\bigvee_{j\in\Lambda} z_j\right)\wedge x}_\la
		=E^{\left(\bigvee_{j\in\Lambda} z_j\right) x}_\la.
	\end{align*}
	
	Now let $x\in\Mcal_+$. Then $x=\al e$ for some $\al\in(0,\infty)$ and $e\in\Ecal(\Mcal)$. Hence
	$$
	\bigvee_{j\in\Lambda}z_j x=\al\bigvee_{j\in\Lambda}z_j e=\al \left(\bigvee_{j\in\Lambda}z_j\right) e=\left(\bigvee_{j\in\Lambda}z_j\right) x.	
	$$
\end{proof}

The goal of the following two propositions is to characterize scalar multiples of atomic projections by means of the spectral order.

\begin{pro}[{\cite[Proposition~3.7]{Bo21}}]\label{characterization of atomic projections on effects}
	Let $\Mcal$ be an \AW{} and let $x\in \Ecal(\Mcal)$ be nonzero. Then the following statements are equivalent:
	\begin{enumerate}
		\item There is $\la\in (0,1]$ and an atomic projection $e\in \Mcal$ such that $x=\la e$.
		\item If $y,z\in\Ecal(\Mcal)$ satisfy $y,z\preceq x$, then $y\preceq z$ or $z\preceq y$.
	\end{enumerate}
\end{pro}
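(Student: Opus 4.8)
The argument is carried out entirely with spectral families and rests on two elementary observations. First, for a projection $p$ and $\mu\in(0,1]$ one has $\mu p\preceq x$ \ifff{} $p\le\unit-\bigvee_{\nu<\mu}E^x_\nu$: this is immediate from the definition of $\preceq$ once one recalls that $E^{\mu p}_\nu=0$ for $\nu<0$, $E^{\mu p}_\nu=\unit-p$ for $\nu\in[0,\mu)$, and $E^{\mu p}_\nu=\unit$ for $\nu\ge\mu$. Second, for a fixed $\mu>0$ and projections $p,q$, $\mu p\preceq\mu q$ \ifff{} $p\le q$, so $\mu p$ and $\mu q$ are $\preceq$-incomparable whenever $p$ and $q$ are orthogonal and nonzero.

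For (i)$\,\Rightarrow\,$(ii): if $x=\la e$ with $e$ an atom and $\la\in(0,1]$, I would show that the effects $y$ with $y\preceq\la e$ are precisely the elements $te$, $t\in[0,\la]$. Indeed $y\preceq\la e$ forces $E^y_\mu\ge E^{\la e}_\mu$ for all $\mu$, so $E^y_\mu=0$ for $\mu<0$, $E^y_\mu=\unit$ for $\mu\ge\la$, and $E^y_\mu\ge\unit-e$ on $[0,\la)$; since $e$ is an atom the last condition forces $E^y_\mu\in\{\unit-e,\unit\}$ there, and monotonicity together with right-continuity of $(E^y_\mu)_\mu$ then pin down a threshold $t\in[0,\la]$ with $E^y_\mu=\unit-e$ on $[0,t)$ and $E^y_\mu=\unit$ on $[t,\infty)$ — i.e.\ $y=te$. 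As $se\preceq te\iff s\le t$, the set $\{te:t\in[0,\la]\}$ is a chain, which gives (ii).

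For (ii)$\,\Rightarrow\,$(i): I would prove the contrapositive. Assume $x\in\Ecal(\Mcal)$ is nonzero and not of the form $\la e$ with $e$ an atom (and $\la\in(0,1]$); I must exhibit incomparable $y,z\in\Ecal(\Mcal)$ with $y,z\preceq x$. Put $\al=\norm{x}\in(0,1]$ and, for $\mu\in(0,\al)$, set $r_\mu=\unit-\bigvee_{\nu<\mu}E^x_\nu$ — the spectral projection of $x$ on $[\mu,\infty)$, which is nonzero since $\mu<\norm{x}$; the map $\mu\mapsto r_\mu$ is non-increasing. If $r_\mu$ fails to be an atom for some $\mu\in(0,\al)$, pick a projection $p_0$ with $0\ne p_0<r_\mu$; then $\mu p_0\preceq x$ and $\mu(r_\mu-p_0)\preceq x$ by the first observation (both projections lie under $r_\mu$), and these two effects are incomparable by the second observation, since $p_0$ and $r_\mu-p_0$ are orthogonal and nonzero. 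Otherwise every $r_\mu$ with $\mu\in(0,\al)$ is an atom; a non-increasing family of nonzero atomic projections is constant, say $r_\mu\equiv r$, so $\bigvee_{\nu<\mu}E^x_\nu=\unit-r$ for all $\mu\in(0,\al)$. This forces $E^x_\mu=\unit-r$ for $\mu\in[0,\al)$, while $E^x_\mu=0$ for $\mu<0$ and $E^x_\mu=\unit$ for $\mu\ge\al$; hence $(E^x_\mu)_\mu$ is the spectral family of $\al r$, so $x=\al r$ with $r$ an atom and $\al\in(0,1]$, contradicting the hypothesis on $x$.

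The genuinely routine part is the spectral-family bookkeeping — the monotonicity and right-continuity manipulations and the handling of the left-limits $\bigvee_{\nu<\mu}E^x_\nu$. The step I expect to need the most care is the final one in the contrapositive: squeezing out, from ``every $r_\mu$ ($\mu\in(0,\al)$) is an atom'', the conclusion that $x$ is a scalar multiple of an atom. The crucial sub-facts there are that a non-increasing family of nonzero atomic projections must be constant and that knowing $\bigvee_{\nu<\mu}E^x_\nu$ for all $\mu$ throughout an interval determines $E^x_\mu$ on that interval. One must also be slightly careful to work over the open interval $(0,\al)$ and not $(0,\al]$: the projection $r_\al$ may vanish when $\al=\norm{x}$ lies in the continuous spectrum of $x$, whereas $r_\mu\ne0$ is guaranteed only for $\mu<\norm{x}$; since $x\ne0$, the interval $(0,\al)$ is nonempty, which is all that is needed.
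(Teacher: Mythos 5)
Your argument is correct: both observations about spectral families of $\mu p$ check out, the atom condition in (i)$\Rightarrow$(ii) does force $E^y_\mu\in\{\unit-e,\unit\}$ on $[0,\la)$ and hence $y=te$, and in the contrapositive the dichotomy on the projections $r_\mu$ (some non-atomic, giving two orthogonal nonzero subprojections and hence incomparable effects $\mu p_0$ and $\mu(r_\mu-p_0)$, versus all atomic, forcing $r_\mu$ constant and $x=\al r$) is airtight, including the care taken with $\mu\in(0,\al)$ rather than $(0,\al]$. The paper itself gives no proof of this statement --- it is imported verbatim from \cite[Proposition~3.7]{Bo21} --- so there is no in-paper argument to compare with; your spectral-family proof is a valid self-contained substitute.
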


\begin{pro}\label{characterization of atomic projections on positive operators}
	Let $\Mcal$ be an \AW{} and let $x\in \Mcal_{+}$ be nonzero. Then the following statements are equivalent:
	\begin{enumerate}
		\item There is $\al> 0$ and an atomic projection $e\in \Mcal$ such that $x=\al e$.
		\item If $y,z\in\Mcal_{+}$ satisfy $y,z\preceq x$, then $y\preceq z$ or $z\preceq y$.
	\end{enumerate}
\end{pro}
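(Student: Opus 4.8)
The plan is to reduce the statement to Proposition~\ref{characterization of atomic projections on effects} by exploiting the homogeneity of the spectral order under positive rescaling. The single extra ingredient I would record first is that for every $\al>0$, every $a\in\Mcal_{sa}$ and every $\la\in\rr$ one has $E^{\al a}_\la=E^a_{\la/\al}$; this is immediate from the characterization of spectral families, since $(E^a_{\la/\al})_{\la\in\rr}$ is a spectral family satisfying the two inequalities characterizing the spectral family of $\al a$. Consequently, for $a,b\in\Mcal_{sa}$ and $\al>0$, we have $a\preceq b$ \ifff{} $\al a\preceq\al b$. This homogeneity is exactly what is needed, and why it is needed is the one point in the argument requiring attention: condition~(ii) here ranges over all of $\Mcal_+$, whereas condition~(ii) of Proposition~\ref{characterization of atomic projections on effects} ranges only over $\Ecal(\Mcal)$, so the two must be linked by rescaling.

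For the implication (i)$\Rightarrow$(ii), I would write $x=\al e$ with $\al>0$ and $e$ an atomic projection, so that $e\in\Ecal(\Mcal)$, $e\neq 0$, and (taking the scalar equal to $1$) Proposition~\ref{characterization of atomic projections on effects} applies to $e$. Given $y,z\in\Mcal_+$ with $y,z\preceq x$, the first step is the normalization $\norm{y},\norm{z}\leq\al$: since $E^x_\al=E^e_1=\unit$, the relation $y\preceq x$ forces $\unit=E^x_\al\leq E^y_\al$, hence $E^y_\al=\unit$, i.e.\ $y\leq\al\unit$. Therefore $\tfrac1\al y,\tfrac1\al z\in\Ecal(\Mcal)$, and by homogeneity $\tfrac1\al y\preceq\tfrac1\al x=e$ and $\tfrac1\al z\preceq e$. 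Proposition~\ref{characterization of atomic projections on effects} then yields that $\tfrac1\al y$ and $\tfrac1\al z$ are comparable in the spectral order, and rescaling by $\al$ gives $y\preceq z$ or $z\preceq y$.

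For (ii)$\Rightarrow$(i), I would put $e_0=\norm{x}^{-1}x$, which is a nonzero element of $\Ecal(\Mcal)$ since $x\neq 0$, and show that $e_0$ satisfies condition~(ii) of Proposition~\ref{characterization of atomic projections on effects}. Indeed, given $y',z'\in\Ecal(\Mcal)$ with $y',z'\preceq e_0$, the elements $\norm{x}\,y'$ and $\norm{x}\,z'$ lie in $\Mcal_+$ and, by homogeneity, satisfy $\norm{x}\,y'\preceq\norm{x}\,e_0=x$ and $\norm{x}\,z'\preceq x$; hypothesis~(ii) then makes $\norm{x}\,y'$ and $\norm{x}\,z'$ comparable, and dividing by $\norm{x}$ makes $y'$ and $z'$ comparable. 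Proposition~\ref{characterization of atomic projections on effects} now provides $\m\in(0,1]$ and an atomic projection $e$ with $e_0=\m e$, so that $x=\norm{x}\,e_0=(\norm{x}\,\m)\,e$ with $\norm{x}\,\m>0$, which is statement~(i). I do not anticipate any serious obstacle: once the homogeneity of $\preceq$ is available, both implications are short, the only line requiring care being the normalization $\norm{y},\norm{z}\leq\al$ above, which is precisely what allows the reduction to $\Ecal(\Mcal)$.
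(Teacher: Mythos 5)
Your proposal is correct and follows essentially the same route as the paper, which simply invokes Proposition~\ref{characterization of atomic projections on effects} together with the rescaling invariance of the spectral order (cited there as \cite[Lemma~3.1]{Bo21}, and which you instead verify directly via $E^{\al a}_\la=E^a_{\la/\al}$). Your write-up just makes explicit the normalization details that the paper leaves to the reader.
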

\begin{proof}
	It follows directly from the previous proposition and \cite[Lemma~3.1]{Bo21}.
\end{proof}

An element $z$ in a lattice $(P,\leq)$ is said to be {\it distributive} if 
\[
z\vee(x\wedge y)=(z\vee x)\wedge(z\vee y).
\]
The set of all distributive elements in $(P,\leq)$ is denoted by $\Dcal(P,\leq)$.
The next statement plays a fundamental role in our discussion of spectral order isomorphisms. 

\begin{pro}[{\cite[Proposition~3.8]{Bo21}}]\label{central elements}
	Let \Mcal{} be an \AW{}. Then
	\begin{enumerate}
		\item $\Zcal(\Mcal)_{sa}=\Dcal(\Mcal_{sa},\preceq)$;
		\item $\Zcal(\Mcal)_+=\Dcal(\Mcal_+,\preceq)$;
		\item $\Ecal(\Zcal(\Mcal))=\Dcal(\Ecal(\Mcal),\preceq)$.
	\end{enumerate}
\end{pro}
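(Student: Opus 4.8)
Here is how I would go about proving Proposition (\ref{central elements}).

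The plan is to prove each of the three equalities by two inclusions. Since, by \cite[Proposition~3.4]{Bo21}, suprema and infima in $(\Mcal_+,\preceq)$ and $(\Ecal(\Mcal),\preceq)$ coincide with those computed in $(\Mcal_{sa},\preceq)$, the arguments for (ii) and (iii) will be the ones for (i) together with a check that the auxiliary elements stay in $\Mcal_+$, resp.\ $\Ecal(\Mcal)$; so I describe the case of $\Mcal_{sa}$.

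For $\Zcal(\Mcal)_{sa}\subseteq\Dcal(\Mcal_{sa},\preceq)$: if $z\in\Zcal(\Mcal)_{sa}$ then each spectral projection $E^z_\la$, commuting with everything that commutes with $z$, is central; and a central projection is distributive in $P(\Mcal)$ (in fact two-sidedly and infinitely), as one sees by splitting $\Mcal$ along it into a direct sum of two corners, in which meets and joins of projections are computed coordinatewise. Using this together with the spectral-family descriptions of $\vee$ and $\wedge$ recalled in Section~2, one computes, for $x,y\in\Mcal_{sa}$,
\begin{align*}
E^{(z\vee x)\wedge(z\vee y)}_\la
&=\bigwedge_{\mu>\la}\bigl((E^z_\mu\wedge E^x_\mu)\vee(E^z_\mu\wedge E^y_\mu)\bigr)\\
&=\bigwedge_{\mu>\la}\bigl(E^z_\mu\wedge(E^x_\mu\vee E^y_\mu)\bigr)
=E^z_\la\wedge E^{x\wedge y}_\la
=E^{z\vee(x\wedge y)}_\la,
\end{align*}
where the second equality is the distributivity of the central projection $E^z_\mu$ and the third uses $\bigwedge_{\mu>\la}(s_\mu\wedge t_\mu)=\bigl(\bigwedge_{\mu>\la}s_\mu\bigr)\wedge\bigl(\bigwedge_{\mu>\la}t_\mu\bigr)$ together with right-continuity of the spectral family of $z$. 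Hence $z\vee(x\wedge y)=(z\vee x)\wedge(z\vee y)$. For (ii) and (iii) the same computation applies verbatim, since then $z\vee x$, $z\vee y$ and $z\vee(x\wedge y)$ remain in $\Mcal_+$, resp.\ $\Ecal(\Mcal)$.

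For the reverse inclusion $\Dcal(\Mcal_{sa},\preceq)\subseteq\Zcal(\Mcal)_{sa}$: the plan is to show that a distributive $z$ has all of its spectral projections central, whence $z$ --- a norm limit of linear combinations of the $E^z_\la$ --- is central. The underlying lattice fact is that a non-central projection is not distributive in $P(\Mcal)$: if $p$ is non-central there is a projection $b$ with $b\neq(b\wedge p)\vee(b\wedge p^\perp)$, so $d:=b\wedge\bigl((b\wedge p)\vee(b\wedge p^\perp)\bigr)^\perp$ is a nonzero projection with $d\wedge p=d\wedge p^\perp=0$; had $p$ satisfied $p\vee(x\wedge y)=(p\vee x)\wedge(p\vee y)$ for all projections $x,y$, then $x=d$, $y=p^\perp$ would give $p\vee d=p\vee(d\wedge p^\perp)=p$, i.e.\ $d\leq p$, contradicting $d\wedge p=0\neq d$. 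Now suppose some $p:=E^z_{\la_0}$ is non-central. An infimum of central projections is central and $\bigwedge_{\la_1>\la_0}E^z_{\la_1^-}=E^z_{\la_0}=p$, so there is $\la_1>\la_0$ with $q:=E^z_{\la_1^-}=\bigvee_{\mu<\la_1}E^z_\mu$ non-central; hence $q\neq\unit$, while $E^z_\mu\leq q$ for all $\mu<\la_1$. Using comparability of projections in \AW{}s, pick nonzero projections $r\leq p$ and $e\leq\unit-q$ with $r\sim e$, say via a partial isometry $u$ with $u^*u=r$, $uu^*=e$, and set $d:=\tfrac12(r+e+u+u^*)$, $d':=\tfrac12(r+e-u-u^*)$; these are orthogonal projections in the corner $(r+e)\Mcal(r+e)$ with $d\vee d'=r+e$, $d\wedge q=d'\wedge q=0$ and $p\wedge(d\vee d')=p\wedge(r+e)=r\neq0$. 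Fix $\beta$ with $\la_0<\beta<\la_1$ and put $x:=\la_0 d+\beta(\unit-d)$, $y:=\la_0 d'+\beta(\unit-d')$. From $d\wedge q=d'\wedge q=0$ and $E^z_\mu\leq q$ for $\mu<\la_1$, the spectral-family formulas give $z\vee x=z\vee y=z\vee\beta\unit$, hence $E^{(z\vee x)\wedge(z\vee y)}_{\la_0}=E^{z\vee\beta\unit}_{\la_0}=0$, whereas $E^{z\vee(x\wedge y)}_{\la_0}=E^z_{\la_0}\wedge E^{x\wedge y}_{\la_0}=p\wedge(d\vee d')=r\neq0$. This contradicts the distributivity of $z$. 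For (ii) and (iii) one notes in addition that necessarily $\la_0\geq0$, resp.\ $\la_0\in[0,1)$ and $\la_1\leq1$, so that $x$ and $y$ can be kept in $\Mcal_+$, resp.\ $\Ecal(\Mcal)$.

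I expect the second inclusion to be the main obstacle, and within it the construction of the witnesses $x,y$: turning the abstract non-centrality of a single spectral projection into a genuine failure of the distributive law for $z$. The delicate points are the choice of $\la_1$ keeping $E^z_{\la_1^-}$ non-central --- which is exactly what forces $d\wedge E^z_\mu=0$ on a whole interval above $\la_0$, not merely at $\la_0$, and collapses the suprema $z\vee x$, $z\vee y$ to $z\vee\beta\unit$ --- and the realization of the $2\times2$ configuration $d,d'$ via comparability of projections; in full generality (no spectral gap above $\la_0$, and projections not comparable across a central decomposition) this last point may require a preliminary reduction by central projections. Everything else reduces to bookkeeping with the spectral-family formulas of Section~2: the orthogonality and join of $d,d'$ and the identities $d\wedge q=0$ and $p\wedge(r+e)=r$ are standard graph-projection computations in the corner $(r+e)\Mcal(r+e)$.
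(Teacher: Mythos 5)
First, a point of reference: the paper does not prove this proposition at all --- it is imported verbatim as \cite[Proposition~3.8]{Bo21} --- so there is no in-paper argument to compare yours against, and I can only assess your proof on its own terms. Your inclusion $\Zcal(\Mcal)_{sa}\subseteq\Dcal(\Mcal_{sa},\preceq)$ is correct and complete: spectral projections of a central element are central, multiplication by a central projection is a complete lattice homomorphism onto the corner (this is essentially Lemmas~\ref{infimum with central projection} and~\ref{supremum and multiplication}), and the interchange $\bigwedge_{\mu>\la}(s_\mu\wedge t_\mu)=\bigl(\bigwedge_{\mu>\la}s_\mu\bigr)\wedge\bigl(\bigwedge_{\mu>\la}t_\mu\bigr)$ is a valid identity in any complete lattice. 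The genuine gap sits in the reverse inclusion, and it is exactly the point you flagged: the existence of nonzero equivalent subprojections $r\le p=E^z_{\la_0}$ and $e\le\unit-q$. Your criterion for choosing $\la_1$ --- that $q=E^z_{\la_1^-}$ be non-central --- does not deliver this. Generalized comparability in an \AW{} produces nonzero equivalent subprojections of $p$ and $\unit-q$ precisely when $c(p)c(\unit-q)\neq0$, where $c(\cdot)$ denotes the central cover, i.e.\ precisely when $c(p)\not\leq q$; and $q$ can perfectly well be non-central while dominating $c(p)$. Concretely, in $\Mcal=B(\cc^2)\oplus B(\cc^2)$ take $z=(\la_0p_1+\mu_1(\unit-p_1),\,\mu_2q_2+\la_1(\unit-q_2))$ with $p_1,q_2$ rank one and $\la_0<\mu_2<\mu_1<\la_1$; then $p=E^z_{\la_0}=(p_1,0)$ is non-central, and the choice of this $\la_1$ gives $q=(\unit,q_2)$, which is non-central, yet $\unit-q=(0,\unit-q_2)$ is centrally orthogonal to $p$, so no $r\sim e$ exists. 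Since your argument selects an arbitrary $\la_1$ with $q$ non-central, it can land on such a bad choice.

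The repair uses the same infimum device you already employ, just aimed at the right target: since $\bigwedge_{\la_1>\la_0}E^z_{\la_1^-}=E^z_{\la_0}=p$ and $p<c(p)$ because $p$ is non-central, not every $E^z_{\la_1^-}$ can dominate $c(p)$; so choose $\la_1>\la_0$ with $c(p)\not\leq q:=E^z_{\la_1^-}$. Then $c(p)(\unit-q)$ is a nonzero projection below $c(p)c(\unit-q)$, and generalized comparability yields nonzero $r\le p$, $e\le\unit-q$ with $r\sim e$. With that substitution the rest of your construction checks out: $d,d'$ are orthogonal projections with $d\vee d'=r+e$, $d\wedge q=d'\wedge q=0$ and $p\wedge(r+e)=r\neq0$; the spectral-family computation collapses $z\vee x$ and $z\vee y$ to $z\vee\beta\unit$, so $E^{(z\vee x)\wedge(z\vee y)}_{\la_0}=0$ while $E^{z\vee(x\wedge y)}_{\la_0}=r\neq0$; and the adjustments for $\Mcal_+$ and $\Ecal(\Mcal)$ (forcing $\la_0\geq0$, resp.\ $\la_1\leq1$) are as you describe. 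So the blueprint is essentially right, but as written the selection of $\la_1$ is the wrong one, and the comparability step you deferred is a real, if short, piece of the proof rather than bookkeeping.
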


A bijection $\tau:P(\Mcal)\to P(\Ncal)$ between projection lattices of \AW{}s $\Mcal$ and $\Ncal$ is called a {\it projection isomorphism} if it preserves the order in both directions (i.e., for all $p,q\in P(\Mcal)$, $p\leq q$ \ifff{} $\tau(p)\leq \tau(q)$). Let $H$ and $K$ be Hilbert spaces of dimension at least 3. By the fundamental theorem of projective geometry \cite[p. 203]{Be95} and the result of Fillmore and Longstaff \cite{FL84}, the form of projective isomorphisms is well known when $\Mcal=B(H)$ and $\Ncal=B(K)$. An interesting result on projection isomorphisms was recently proved by Mori \cite{Mo20}.
He described projection isomorphisms between projection lattices of von Neumann algebras by means of ring isomorphisms between algebras of locally measurable operators.

Let $\tau:P(\Mcal)\to P(\Ncal)$ be a projection isomorphism. In the sequel, we shall denote by $\Theta_\tau$ the bijection from $\Mcal_{sa}$ onto $\Ncal_{sa}$ defined by 
$$
E^{\Theta_\tau(x)}_\la=\tau(E^x_\la).
$$
This map is indeed a spectral order isomorphism. Since  $\Theta_\tau(\Ecal(\Mcal))=\Ecal(\Ncal)$ and $\Theta_\tau(\Mcal_+)=\Ncal_+$, the corresponding restrictions of $\Theta_\tau$ are spectral order isomorphisms $\f:\Ecal(\Mcal)\to\Ecal(\Ncal)$ and $\psi:\Mcal_+\to\Ncal_+$.

An important example of a projection isomorphism is given by a restriction of \Jsi{}. By a {\it \Jsi{}} we mean a linear bijection $\psi:\Mcal\to\Ncal{}$ such that, for all $x\in\Mcal$, $\psi(x^2)=\psi(x)^2$ and $\psi(x^*)=\psi(x)^*$. If a projection isomorphism $\tau$ is a restriction of a \Jsi{} $\psi:\Mcal\to \Ncal$, then $\Theta_\tau(x)=\psi(x)$ for all $x\in\Mcal_{sa}$. As $\psi$ preserves orthogonality relation in both directions, a restriction of $\psi$ is a spectral order orthoisomorphism.

%%%%%%%%%%%%%%%%%%%%%%%%%%%%%%%%%%%%%%%%%%%%%%%%%%%%%%%%%%%%%%%%%%%%%%%%
%%%%%%%%%%%%%%%%%%%%%%%%%%%%%%%%%%%%%%%%%%%%%%%%%%%%%%%%%%%%%%%%%%%%%%%%
\section{Isomorphisms between lattices of effects}

In the sequel, we shall denote by $P_{at}(\Mcal)$ the set of all atomic projections in an \AW{} \Mcal{}. The proof of the following lemma is based on arguments used in \cite{MS07}.

\begin{lem}\label{preserving projections}
Let \Mcal{} and \Ncal{} be \AW{}s. If $\f:\Ecal(\Mcal)\to\Ecal(\Ncal)$ is a spectral order isomorphism, then $\f(P_{at}(\Mcal))=P_{at}(\Ncal)$.
\end{lem}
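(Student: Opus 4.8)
The plan is to characterize the atomic projections inside $\Ecal(\Mcal)$ purely in terms of the spectral order, so that the characterization is automatically preserved by any spectral order isomorphism. Proposition~\ref{characterization of atomic projections on effects} already isolates the \emph{scalar multiples} of atomic projections $\la e$ with $\la\in(0,1]$ by the order-theoretic condition that the elements below them form a chain. So the task reduces to: among all such ``chain-minorized'' elements, single out the ones with $\la=1$, i.e. the honest projections, using only $\preceq$.

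First I would note that if $x=\la e$ is a scalar multiple of an atomic projection, then the set $\{y\in\Ecal(\Mcal): y\preceq x\}$ is exactly the chain $\{\beta e : 0\le \beta\le \la\}$ together with $0$ — this follows because $y\preceq x$ forces the spectral family of $y$ to dominate that of $x$, and since $e$ is atomic the relevant projections can only be $0$ or $e$; a short computation with spectral families (as in the characterization propositions) shows $y$ must be of the form $\beta e$. Consequently $x=\la e$ is a projection if and only if it is \emph{maximal} in the set $P_{at}'(\Mcal)$ of all scalar multiples of atomic projections lying in $\Ecal(\Mcal)$, equivalently: $x\in P_{at}'(\Mcal)$ and there is no $z\in P_{at}'(\Mcal)$ with $x\prec z$. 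Indeed $\la e\preceq \mu e$ iff $\la\le\mu$, and the top of each such chain inside the unit ball is the projection $e$ itself, since $\mu e\in\Ecal(\Mcal)$ forces $\mu\le 1$.

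Now the argument finishes quickly. A spectral order isomorphism $\f:\Ecal(\Mcal)\to\Ecal(\Ncal)$ preserves the property in Proposition~\ref{characterization of atomic projections on effects}(ii) in both directions, hence $\f(P_{at}'(\Mcal))=P_{at}'(\Ncal)$; and since $\f$ preserves $\preceq$ in both directions it carries maximal elements of $P_{at}'(\Mcal)$ (with respect to the induced order) to maximal elements of $P_{at}'(\Ncal)$. By the previous paragraph the maximal elements of $P_{at}'(\Mcal)$ are precisely the atomic projections $P_{at}(\Mcal)$, and likewise for $\Ncal$. Therefore $\f(P_{at}(\Mcal))=P_{at}(\Ncal)$.

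The only real point requiring care — the ``main obstacle'' — is the claim that $\{y\in\Ecal(\Mcal):y\preceq \la e\}=\{\beta e:0\le\beta\le\la\}$ when $e$ is atomic; everything else is formal manipulation with the order. To verify this I would pass to the abelian \AWsa{} generated by $\{\unit,e,y\}$ if $y$ happens to commute with $e$, but in general one cannot assume commutativity, so instead I would argue directly with spectral families: $y\preceq \la e$ means $E^{\la e}_\mu\le E^y_\mu$ for all $\mu$, and $E^{\la e}_\mu$ equals $0$ for $\mu<0$, equals $\unit-e$ for $0\le\mu<\la$, and $\unit$ for $\mu\ge\la$; so $E^y_\mu\ge\unit-e$ for $0\le\mu<\la$ and $E^y_\mu=\unit$ for $\mu\ge\la$, which together with $y\ge 0$ and the atomicity of $e$ (so that $\unit-e$ is a coatom relative to $e$) pins down $E^y_\mu$ to be either $\unit-e$ or $\unit$ on $[0,\la)$, whence $y=\beta e$ for the threshold $\beta$ at which the jump occurs, and $0\le\beta\le\la$ because $\|y\|\le 1$ is not even needed here, only $y\preceq\la e$. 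This is essentially the computation underlying Proposition~\ref{characterization of atomic projections on effects}, so it can be invoked with minimal extra work.
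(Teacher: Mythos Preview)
Your argument is correct and follows exactly the paper's route: use Proposition~\ref{characterization of atomic projections on effects} to identify the set of scalar multiples of atomic projections as an order-invariant subset, then pick out the atomic projections as its maximal elements. The paper's proof is simply the terse version of what you wrote; your additional spectral-family verification that $\{y\in\Ecal(\Mcal):y\preceq\la e\}=\{\beta e:0\le\beta\le\la\}$ is not strictly needed (maximality of $e$ only requires that $e\preceq\mu f$ forces $e=\mu f$, which follows from the same computation, and non-maximality of $\la e$ with $\la<1$ is immediate from $\la e\prec e$), but it does no harm.
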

	\begin{proof}
By Proposition~\ref{characterization of atomic projections on effects}, $\f(M)=N$, where 
				\begin{align*}
					M&=\set{\la p}{p\in P_{at}(\Mcal), \la\in(0,1]},\\
					N&=\set{\la p}{p\in P_{at}(\Ncal), \la\in(0,1]}.
				\end{align*}
				The set of all maximal elements of $(M,\preceq)$ (resp. $(N,\preceq)$) is $P_{at}(\Mcal)$ (resp. $P_{at}(\Ncal)$).
				Thus $\f(P_{at}(\Mcal))=P_{at}(\Ncal)$.
	\end{proof}
	
Throughout the rest of this paper, $(\Mcal_j)_{j\in\Lambda}$ and $(\Ncal_k)_{k\in\Gamma}$ will be (nonempty) families of \AWf{}s. Let $\delta_{jk}$ be the Kronecker delta. For each $j\in\Lambda$ and $k\in\Gamma$, we set
\begin{align}
	z_j&=(\delta_{jl}\unit_{\Mcal_l})_{l\in\Lambda},\label{central elements in M}\\	w_k&=(\delta_{kl}\unit_{\Ncal_l})_{l\in\Gamma}.\label{central elements in N}
\end{align}
Note that elements $z_j$ and $w_k$ belong to the center of $\bigoplus_{j\in\Lambda}\Mcal_j$ and $\bigoplus_{k\in\Gamma}\Ncal_k$, respectively.

	\begin{theo}\label{atomic algebras and effects}
Let $\Mcal=\bigoplus_{j\in\Lambda}\Mcal_j$ and $\Ncal=\bigoplus_{k\in\Gamma}\Ncal_k$, where $\Mcal_j$ and $\Ncal_k$ are \AWf{}s. If $\Phi:\Ecal(\Mcal)\to\Ecal(\Ncal)$ is a spectral order isomorphism, then there are a bijection $\pi:\Gamma\to\Lambda$ and a family $(\f_j)_{j\in\Lambda}$ of spectral order isomorphisms $\f_j:\Ecal(\Mcal_j)\to\Ecal(\Ncal_{\pi^{-1}(j)})$ such that 
$$
\Phi((x_j)_{j\in\Lambda})=(\f_{\pi(k)}(x_{\pi(k)}))_{k\in\Gamma}.
$$
	\end{theo}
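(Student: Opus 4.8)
The plan is to use the central projections $z_j$ as markers for the direct summands of $\Mcal$, to show that $\Phi$ permutes them, and then to glue the pieces together.

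First I would pass to the centers. Being an order isomorphism in both directions between the complete lattices $(\Ecal(\Mcal),\preceq)$ and $(\Ecal(\Ncal),\preceq)$, $\Phi$ preserves all existing suprema and infima, and in particular it preserves distributive elements; hence by Proposition~\ref{central elements}(iii) it restricts to a spectral order isomorphism of $\Ecal(\Zcal(\Mcal))$ onto $\Ecal(\Zcal(\Ncal))$ (here $\Ecal(\Zcal(\Mcal))$ carries the spectral order of the \AW{} $\Zcal(\Mcal)$, which is just the restriction of $\preceq$ since $\Zcal(\Mcal)$ is abelian). Because every $\Mcal_j$ and every $\Ncal_k$ is a factor, $\Zcal(\Mcal)=\bigoplus_{j\in\Lambda}\cc\unit_{\Mcal_j}$ and $\Zcal(\Ncal)=\bigoplus_{k\in\Gamma}\cc\unit_{\Ncal_k}$, so the atomic projections of $\Zcal(\Mcal)$ are exactly the $z_j$ and those of $\Zcal(\Ncal)$ are exactly the $w_k$. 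Applying Proposition~\ref{characterization of atomic projections on effects} intrinsically inside the abelian \AW{}s $\Zcal(\Mcal)$ and $\Zcal(\Ncal)$ --- admissible because the lattice operations of $\Ecal(\Zcal(\Mcal))$ agree with the ambient ones by \cite{Bo21} --- I get $\Phi(\{\la z_j : j\in\Lambda,\ \la\in(0,1]\})=\{\mu w_k : k\in\Gamma,\ \mu\in(0,1]\}$, and then, passing to maximal elements exactly as in the proof of Lemma~\ref{preserving projections}, $\Phi(\{z_j:j\in\Lambda\})=\{w_k:k\in\Gamma\}$. This defines a bijection $\pi:\Gamma\to\Lambda$ by $\Phi(z_{\pi(k)})=w_k$.

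Next I would identify the summands. Fix $j\in\Lambda$ and put $k=\pi^{-1}(j)$. By Lemma~\ref{infimum with central projection}, for $x\in\Ecal(\Mcal)$ one has $x\preceq z_j$ iff $z_jx=x$, i.e. iff $x$ vanishes in every coordinate other than $j$; hence $x\mapsto x_j$ together with Lemma~\ref{spectral order on direct sum} identifies the principal ideal $\{x\in\Ecal(\Mcal):x\preceq z_j\}$ with $\Ecal(\Mcal_j)$ as spectral lattices, and similarly $\{y\in\Ecal(\Ncal):y\preceq w_k\}$ with $\Ecal(\Ncal_k)$. Since $\Phi$ is an order isomorphism carrying $z_j$ to $w_k$, it maps the first ideal onto the second; composing with the two identifications gives a spectral order isomorphism $\f_j:\Ecal(\Mcal_j)\to\Ecal(\Ncal_{\pi^{-1}(j)})$, namely $\f_j(a)$ is the $k$-th coordinate of the image under $\Phi$ of the element of $\Ecal(\Mcal)$ with $a$ in coordinate $j$ and $0$ elsewhere.

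Finally I would assemble the formula. As $\bigvee_{j\in\Lambda}z_j=\unit$, Lemma~\ref{supremum and multiplication} gives $x=\bigvee_{j\in\Lambda}z_jx$ for each $x=(x_l)_{l\in\Lambda}\in\Ecal(\Mcal)$, so $\Phi(x)=\bigvee_{j\in\Lambda}\Phi(z_jx)$ by preservation of suprema. By the previous step $\Phi(z_jx)$ is the element of $\Ecal(\Ncal)$ with $\f_j(x_j)$ in coordinate $\pi^{-1}(j)$ and $0$ elsewhere; reindexing the join by $k=\pi^{-1}(j)$ and applying Lemma~\ref{supremum and multiplication} once more (in $\Ncal$) to the tuple $(\f_{\pi(k)}(x_{\pi(k)}))_{k\in\Gamma}$ shows $\Phi(x)=(\f_{\pi(k)}(x_{\pi(k)}))_{k\in\Gamma}$, as claimed. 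The step I expect to be the main obstacle is the first one: one must verify carefully that $\Phi$ restricted to the distributive elements really is onto $\Ecal(\Zcal(\Ncal))$ and that the characterization of scalar multiples of atomic projections may be applied inside the centers rather than inside $\Mcal$ and $\Ncal$. It is exactly here that factoriality of the summands enters, since only then are the $z_j$ atomic in $\Zcal(\Mcal)$.
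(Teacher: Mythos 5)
Your proposal is correct and follows essentially the same route as the paper's proof: restriction to the centers via Proposition~\ref{central elements}, identification of the $z_j$ and $w_k$ as the atomic central projections via Proposition~\ref{characterization of atomic projections on effects} (i.e.\ Lemma~\ref{preserving projections} applied to the restricted map), transfer of the corner $z_j\Ecal(\Mcal)$ onto $w_{\pi^{-1}(j)}\Ecal(\Ncal)$ via Lemma~\ref{infimum with central projection}, and reassembly via Lemma~\ref{supremum and multiplication}. Your phrasing of the corner step in terms of principal order ideals below $z_j$ is just a repackaging of the paper's computation $\Phi(z_j x)=\Phi(z_j\wedge x)=w_{\pi^{-1}(j)}\wedge\Phi(x)$.
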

		\begin{proof}
Using Proposition~\ref{central elements}, we see that $\Phi(\Ecal(\Zcal(\Mcal)))=\Ecal(\Zcal(\Ncal))$. Since $\Zcal(\Mcal)=\bigoplus_{j\in\Lambda}\Zcal(\Mcal_j)$ and each $\Zcal(\Mcal_j)$ is \Sic{} to \cc{}, $\Zcal(\Mcal)$ is an atomic \AW{}. Similarly, $\Zcal(\Ncal)$ is an atomic \AW{}. Furthermore, Lemma~\ref{preserving projections} establishes that \[\Phi(P_{at}(\Zcal(\Mcal)))=P_{at}(\Zcal(\Ncal)).\]

Let $z_j$ and $w_k$ be elements defined in (\ref{central elements in M}) and (\ref{central elements in N}), respectively. Clearly,
\[
P_{at}(\Zcal(\Mcal))=\set{z_j}{j\in\Lambda}\quad \mbox{and}\quad
P_{at}(\Zcal(\Ncal))=\set{w_k}{k\in\Gamma}.
\]
It follows from $\Phi(P_{at}(\Zcal(\Mcal)))=P_{at}(\Zcal(\Ncal))$ that there is a bijection $\pi:\Gamma\to\Lambda$ such that $\Phi(z_j)=w_{\pi^{-1}(j)}$ for all $j\in\Lambda$. If $x\in z_j\Ecal(\Mcal)$, then
\[
\Phi(x)=\Phi(z_j x) =\Phi(z_j\wedge x)=w_{\pi^{-1}(j)}\wedge \Phi(x)=w_{\pi^{-1}(j)}\Phi(x)\in w_{\pi^{-1}(j)}\Ecal(\Ncal)
\]
by Lemma~\ref{infimum with central projection}. On the other hand, if $y\in w_{\pi^{-1}(j)}\Ecal(\Ncal)$, then
\[
\Phi(z_j\Phi^{-1}(y))=\Phi(z_j\wedge\Phi^{-1}(y))=w_{\pi^{-1}(j)}\wedge y=y.
\]
This shows that $\Phi(z_j\Ecal(\Mcal))=w_{\pi^{-1}(j)}\Ecal(\Ncal)$ for all $j\in\Lambda$. In other words, there is a family $(\f_j)_{j\in\Lambda}$ of spectral order isomorphisms $\f_j:\Ecal(\Mcal_j)\to\Ecal(\Ncal_{\pi^{-1}(j)})$ such that 
\[
\Phi(z_l(x_k)_{k\in\Lambda})=w_{\pi^{-1}(l)}(\f_{\pi(k)}(x_{\pi(k)}))_{k\in\Gamma}
\]
for all $l\in\Lambda$. According to Lemma~\ref{supremum and multiplication},
\begin{align*}
	\Phi((x_j)_{j\in\Lambda})
	&=\Phi(\bigvee_{l\in\Lambda}z_l (x_j)_{j\in\Lambda})
	=\bigvee_{l\in\Lambda} \Phi(z_l(x_j)_{j\in\Lambda})
	=\bigvee_{l\in\Lambda} w_{\pi^{-1}(l)}(\f_{\pi(k)}(x_{\pi(k)}))_{k\in\Gamma}\\
	&=\left(\bigvee_{l\in\Lambda} w_{\pi^{-1}(l)}\right)(\f_{\pi(k)}(x_{\pi(k)}))_{k\in\Gamma}
	=(\f_{\pi(k)}(x_{\pi(k)}))_{k\in\Gamma}.
\end{align*}
		\end{proof}

It was shown in \cite[Corollary~4.2]{Bo21} (see also \cite{MN16, MS07} for the special case of automorphisms) that if $\f:\Ecal(\Mcal)\to\Ecal(\Ncal)$ is a spectral order isomorphism, where \Mcal{} and \Ncal{} are \AWf{}s of Type~I, then there are a projection isomorphism $\tau:P(\Mcal)\to P(\Ncal)$ and a bijection $f:[0,1]\to[0,1]$ such that $\f(x)=\Theta_\tau(f(x))$ for all $x\in\Ecal(\Mcal)$. With this fact in mind, Theorem~\ref{atomic algebras and effects} leads immediately to a complete description of spectral order isomorphisms between spectral sublattices of all effects of direct sums of Type~I factors. We formulate an explicit statement in the case of direct sums of full matrix algebras. Of course, the special case of general matrix algebras is covered by this result.

\begin{cor}\label{spectral order isomorphisms on effects in finite dimension}
Let $\Mcal=\bigoplus_{j\in \Lambda} B(\cc^{m_j})$ and $\Ncal=\bigoplus_{k\in \Gamma} B(\cc^{n_k})$, where $m_j$ and $n_k$ are natural numbers for each $j\in \Lambda$ and each $k\in \Gamma$. If $\Phi:\Ecal(\Mcal)\to\Ecal(\Ncal)$ is a spectral order isomorphism, then there are a bijection $\pi:\Gamma\to \Lambda$ with $m_{j}=n_{\pi^{-1}(j)}$ for all $j\in \Lambda$, a family $(f_j)_{j\in \Lambda}$ of strictly increasing bijections $f_j:[0,1]\to[0,1]$, and a family $(\tau_j)_{j\in \Lambda}$ of projection automorphisms $\tau_j:P(B(\cc^{m_j}))\to P(B(\cc^{m_j}))$ such that 
\[
\Phi((x_j)_{j\in \Lambda})=(\Theta_{\tau_{\pi(k)}}(f_{\pi(k)}(x_{\pi(k)})))_{k\in \Gamma}.
\]
\end{cor}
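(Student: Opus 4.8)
The plan is to derive Corollary~\ref{spectral order isomorphisms on effects in finite dimension} directly from Theorem~\ref{atomic algebras and effects} together with the known structure of spectral order isomorphisms between spectral lattices of effects of Type~I factors. First I would invoke Theorem~\ref{atomic algebras and effects}: since each $B(\cc^{m_j})$ and each $B(\cc^{n_k})$ is an \AWf{} of Type~I (indeed of Type~I$_{m_j}$, resp.\ I$_{n_k}$), the theorem yields a bijection $\pi:\Gamma\to\Lambda$ and a family $(\f_j)_{j\in\Lambda}$ of spectral order isomorphisms $\f_j:\Ecal(B(\cc^{m_j}))\to\Ecal(B(\cc^{n_{\pi^{-1}(j)}}))$ with $\Phi((x_j)_{j\in\Lambda})=(\f_{\pi(k)}(x_{\pi(k)}))_{k\in\Gamma}$.

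Next I would apply \cite[Corollary~4.2]{Bo21} (with \cite{MN16,MS07} covering the two-dimensional and automorphism cases) to each individual $\f_j$. This provides, for every $j\in\Lambda$, a projection isomorphism $\tau_j':P(B(\cc^{m_j}))\to P(B(\cc^{n_{\pi^{-1}(j)}}))$ and a strictly increasing bijection $f_j:[0,1]\to[0,1]$ such that $\f_j(x)=\Theta_{\tau_j'}(f_j(x))$ for all $x\in\Ecal(B(\cc^{m_j}))$. The existence of a projection isomorphism between $P(B(\cc^{m_j}))$ and $P(B(\cc^{n_{\pi^{-1}(j)}}))$ forces the two lattices to have the same height, hence $m_j=n_{\pi^{-1}(j)}$; after identifying $B(\cc^{n_{\pi^{-1}(j)}})$ with $B(\cc^{m_j})$ via this equality, $\tau_j'$ becomes a projection automorphism $\tau_j$ of $P(B(\cc^{m_j}))$. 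Substituting these forms back into the componentwise expression for $\Phi$ gives exactly
\[
\Phi((x_j)_{j\in\Lambda})=(\Theta_{\tau_{\pi(k)}}(f_{\pi(k)}(x_{\pi(k)})))_{k\in\Gamma},
\]
as claimed.

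I do not anticipate a serious obstacle here: the corollary is essentially a bookkeeping consequence of Theorem~\ref{atomic algebras and effects} and an already-cited structure theorem. The only point requiring a line of justification is the dimension-matching $m_j=n_{\pi^{-1}(j)}$, which I would phrase by noting that a projection isomorphism is in particular an order isomorphism of finite complemented modular lattices and therefore preserves the length (equivalently, the rank of the identity), so $m_j=n_{\pi^{-1}(j)}$; this also legitimizes regarding $\tau_j$ as an automorphism rather than merely an isomorphism between a priori distinct lattices. One should also briefly remark that in the finite-dimensional case the strict increase of $f_j$ is automatic from \cite[Corollary~4.2]{Bo21} (or can be recovered from the fact that $f_j$ is an order isomorphism of $[0,1]$), so no additional argument is needed.
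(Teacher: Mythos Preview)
Your proposal is correct and follows essentially the same route as the paper: invoke Theorem~\ref{atomic algebras and effects} to reduce to componentwise spectral order isomorphisms, then apply \cite[Corollary~4.2]{Bo21} to each component to obtain the $(\tau_j,f_j)$ data, and finally read off $m_j=n_{\pi^{-1}(j)}$ from the existence of a projection isomorphism. The paper's proof is terser (it simply asserts the dimension equality without elaboration), while you spell out the lattice-height argument; otherwise the two proofs coincide.
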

	\begin{proof}		
Theorem~\ref{atomic algebras and effects} together with \cite[Corollary~4.2]{Bo21} ensure the existence of a bijection $\pi:\Gamma\to \Lambda$, a family $(f_j)_{j\in \Lambda}$ of strictly increasing bijections $f_j:[0,1]\to[0,1]$, and a family $(\tau_j)_{j\in \Lambda}$ of projection isomorphisms $\tau_j:\Ecal(B(\cc^{m_j}))\to\Ecal(B(\cc^{n_{\pi^{-1}(j)}}))$ such that 
\[
\Phi((x_j)_{r\in \Lambda})=(\Theta_{\tau_{\pi(k)}}(f_{\pi(k)}(x_{\pi(k)})))_{k\in \Gamma}.
\]
As there is a projection isomorphism $\tau_r:\Ecal(B(\cc^{m_r}))\to\Ecal(B(\cc^{n_{\pi^{-1}(r)}}))$, we have $m_r=n_{\pi^{-1}(r)}$.
	\end{proof}

The next simple consequence of Theorem~\ref{atomic algebras and effects} is the following description of spectral order orthoisomorphisms. 

\begin{cor}
Suppose that $\Mcal=\bigoplus_{j\in\Lambda}\Mcal_j$ and $\Ncal=\bigoplus_{k\in\Gamma}\Ncal_k$, where \AWf{}s $\Mcal_j$ and $\Ncal_k$ are not of  Type~I$_2$. If $\Phi:\Ecal(\Mcal)\to\Ecal(\Ncal)$ is a spectral order orthoisomorphism, then there are a bijection $\pi:\Gamma\to\Lambda$, a family $(\psi_j)_{j\in\Lambda}$ of \Jsi{}s $\psi_j:\Mcal_j\to\Ncal_{\pi^{-1}(j)}$, and a family $(f_j)_{j\in\Lambda}$ of strictly increasing bijections $f_j:[0,1]\to[0,1]$ such that 
	\[
	\Phi((x_j)_{j\in\Lambda})=(\psi_{\pi(k)}(f_{\pi(k)}(x_{\pi(k)})))_{k\in\Gamma}.
	\]	
\end{cor}
\begin{proof}
	The proof follows immediately from \cite[Corollary~5.2]{Bo21} and Theorem~\ref{atomic algebras and effects}.
\end{proof}
%%%%%%%%%%%%%%%%%%%%%%%%%%%%%%%%%%%%%%%%%%%%%%%%%%%%%%%%%%%%%%%%%%%%%%%%
%%%%%%%%%%%%%%%%%%%%%%%%%%%%%%%%%%%%%%%%%%%%%%%%%%%%%%%%%%%%%%%%%%%%%%%%
\section{Isomorphisms between lattices of positive elements}

\begin{theo}\label{atomic algebras and positive operators}
Let $\Mcal=\bigoplus_{j\in\Lambda}\Mcal_j$ and $\Ncal=\bigoplus_{k\in\Gamma}\Ncal_k$, where $\Mcal_j$ and $\Ncal_k$ are \AWf{}s. If $\Phi:\Mcal_+\to\Ncal_+$ is a spectral order isomorphism, then there are a bijection $\pi:\Gamma\to\Lambda$ and a family $(\f_j)_{j\in\Lambda}$ of spectral order isomorphisms $\f_j$ from the positive part of $\Mcal_j$ onto positive part of $\Ncal_{\pi^{-1}(j)}$ such that 
\[
\Phi((x_j)_{j\in\Lambda})=(\f_{\pi(k)}(x_{\pi(k)}))_{k\in\Gamma}.
\]	
\end{theo}
	\begin{proof}
It follows from Proposition~\ref{central elements} that $\Phi(\Zcal(\Mcal)_+)=\Zcal(\Ncal)_+$. Thus a restriction of $\Phi$ is a spectral order isomorphism from $\Zcal(\Mcal)_+$ onto $\Zcal(\Ncal)_+$. Since $\Mcal$ and $\Ncal$ are direct sums of factors, $\Zcal(\Mcal)$ and $\Zcal(\Ncal)$ are atomic \AW{}s. Let $z_j$ and $w_k$ be elements defined in (\ref{central elements in M}) and (\ref{central elements in N}), respectively. Suppose that $j\in\Lambda$. By Proposition~\ref{characterization of atomic projections on positive operators}, for each $\la\in(0,\infty)$, there are $k\in\Gamma$ and a positive number $f_j(\la)$ such that $\Phi(\la z_j)=f_j(\la)w_k$. We show that $k$ does not depend on \la{}. For this, choose $\la,\mu\in(0,\infty)$. Then 
\[
f_j(\max\{\la,\mu\})w_m=\Phi(\max\{\la,\mu\}z_j)=\Phi(\la z_j\vee \mu z_j)=f_j(\la)w_k\vee f_j(\mu)w_l
\]
for some $k,l,m\in\Gamma$. If $k\neq l$, then we see from \cite[Lemma~3.5]{Bo21} that $f_j(\la)=0$ or $f_j(\mu)=0$ which is a contradiction. Consequently,
there are a permutation $\pi:\Gamma\to\Lambda$ and a family $(f_j)_{j\in\Lambda}$ of strictly increasing bijections $f_j:[0,\infty)\to[0,\infty)$ such that
\[
\Phi(\la z_j)=f_j(\la)w_{\pi^{-1}(j)}
\]
for all $\la\in[0,\infty)$ and all $j\in\Lambda$.

Let $(x_j)_{j\in\Lambda}$ belong to $\Mcal_+$. Then there is $c\in(0,\infty)$ such that $x_j\preceq c\unit_{\Mcal_j}$ for all $j\in\Lambda$. We observe that, for each $j\in\Lambda$, 
\[
f^{-1}_j(c)z_j=\Phi^{-1}(cw_{\pi^{-1}(j)})\preceq\Phi^{-1}(c\unit_{\Ncal})
\] 
and so $0\leq f^{-1}_j(c)z_j\leq\Phi^{-1}(c\unit_{\Ncal})$. Hence 

\[
\sup_{j\in\Lambda}f^{-1}_j(c)=\sup_{j\in\Lambda} \norm{f^{-1}_j(c)z_j}\leq \norm{\Phi^{-1}(c\unit_{\Ncal})}<\infty.
\]
As $f^{-1}_j$ defines a spectral order automorphism of the positive part of $\Mcal_j$, 
\[f^{-1}_j(x_j)\preceq f^{-1}_j(c)\unit_{\Mcal_j}\preceq \norm{\Phi^{-1}(c\unit_{\Ncal})}\unit_{\Mcal_j}
\]
which implies that 
$$
\sup_{j\in\Lambda}\norm{f^{-1}_j(x_j)}\leq \norm{\Phi^{-1}(c\unit_{\Ncal})}<\infty.
$$ 
This allows us to conclude that $(f^{-1}_j(x_j))_{j\in\Lambda}\in\Mcal_+$. Therefore, a map $\Psi:\Mcal_+\to\Ncal_+$ given by 
$$
\Psi((x_j)_{j\in\Lambda})=\Phi((f^{-1}_j(x_j))_{j\in\Lambda})
$$ 
for all $(x_j)_{j\in\Lambda}\in\Mcal_+$ is well defined. It is easy to see that $\Psi$ is a spectral order isomorphism. Moreover, $\Psi(\la z_j)=\la w_{\pi^{-1}(j)}$ for all $j\in \Lambda$ and all $\la\in[0,\infty)$.

Suppose that $x\in z_j\Mcal_+$. Then there are $\al\in(0,\infty)$ and $e\in z_j\Ecal(\Mcal)$ such that $x=\al e$. Using Lemma~\ref{infimum with central projection} and the fact that $\Psi$ is a spectral order isomorphism,
$$
\Psi(x)=\Psi(\al (z_j \wedge e))=\Psi((\al z_j) \wedge(\al e))=(\al w_{\pi^{-1}(j)})\wedge \Psi(x)=\al\left[w_{\pi^{-1}(j)}\wedge\frac{1}{\al}\Psi(x)\right].
$$
As 
$$
\Psi(\al\unit_\Mcal)=\Psi(\bigvee_{j\in\Lambda}\al z_j)=\bigvee_{j\in\Lambda}\Psi(\al z_j)=\bigvee_{j\in\Lambda}\al w_{\pi^{-1}(j)}=\al\unit_\Ncal,
$$
we have $\frac{1}{\al}\Psi(x)\in\Ecal(\Ncal)$. It follows from Lemma~\ref{infimum with central projection} that $\Psi(x)=w_{\pi^{-1}(j)}\Psi(x)$. Similarly, we show that  $\Psi^{-1}(y)\in z_j\Mcal_+$ whenever $y\in w_{\pi^{-1}(j)}\Ncal_+$. This proves that $\Psi(z_j\Mcal_+)=w_{\pi^{-1}(j)}\Ncal_+$ for all $j\in\Lambda$ and so 
\[
\Psi(z_j(x_k)_{k\in\Lambda})=w_{\pi^{-1}(j)}(\psi_{\pi(k)}(x_{\pi(k)}))_{k\in\Gamma}
\]
for some family $(\psi_j)_{j\in\Lambda}$ of spectral order isomorphisms $\psi_j$ from the positive part of $\Mcal_{j}$ onto the positive part of $\Ncal_{\pi^{-1}(j)}$. By Lemma~\ref{supremum and multiplication},
\begin{align*}
	\Psi((x_k)_{k\in\Lambda})
	&=\Psi(\bigvee_{j\in\Lambda}z_j (x_k)_{k\in\Lambda})
	=\bigvee_{j\in\Lambda} \Psi(z_j(x_k)_{k\in\Lambda})
	=\bigvee_{j\in\Lambda} w_{\pi^{-1}(j)}(\psi_{\pi(k)}(x_{\pi(k)}))_{k\in\Gamma}\\
	&=\left(\bigvee_{j\in\Lambda} w_{\pi^{-1}(j)}\right)(\psi_{\pi(k)}(x_{\pi(k)}))_{k\in\Gamma}
	=(\psi_{\pi(k)}(x_{\pi(k)}))_{k\in\Gamma}.
\end{align*}
	\end{proof}

As in the case of effects, one can formulate the following corollaries.

\begin{cor}
Let $\Mcal=\bigoplus_{j\in \Lambda} B(\cc^{m_j})$ and $\Ncal=\bigoplus_{k\in \Gamma} B(\cc^{n_k})$, where $m_j$ and $n_k$ are natural numbers for each $j\in \Lambda$ and each $k\in \Gamma$. If $\Phi:\Mcal_+\to\Ncal_+$ is a spectral order isomorphism, then there are a bijection $\pi:\Gamma\to \Lambda$ with $m_{j}=n_{\pi^{-1}(j)}$ for all $j\in \Lambda$, a family $(f_j)_{j\in \Lambda}$ of strictly increasing bijections $f_j:[0,\infty)\to[0,\infty)$, and a family $(\tau_j)_{j\in \Lambda}$ of projection automorphisms $\tau_j:P(B(\cc^{m_j}))\to P(B(\cc^{m_j}))$ such that 
\[
\Phi((x_j)_{j\in \Lambda})=(\Theta_{\tau_{\pi(k)}}(f_{\pi(k)}(x_{\pi(k)})))_{k\in \Gamma}.
\]
\end{cor}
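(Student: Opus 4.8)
The plan is to imitate the proof of Corollary~\ref{spectral order isomorphisms on effects in finite dimension}, replacing Theorem~\ref{atomic algebras and effects} by Theorem~\ref{atomic algebras and positive operators} and the effect-level structure theorem \cite[Corollary~4.2]{Bo21} by its positive-element counterpart. First I would apply Theorem~\ref{atomic algebras and positive operators} to $\Phi$: it yields a bijection $\pi:\Gamma\to\Lambda$ together with a family $(\f_j)_{j\in\Lambda}$ of spectral order isomorphisms $\f_j$ from $B(\cc^{m_j})_+$ onto $B(\cc^{n_{\pi^{-1}(j)}})_+$ such that $\Phi((x_j)_{j\in\Lambda})=(\f_{\pi(k)}(x_{\pi(k)}))_{k\in\Gamma}$. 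Thus everything reduces to understanding a single spectral order isomorphism between the positive parts of two Type~I factors.

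For that I would invoke the known description of spectral order isomorphisms $\f:\Mcal_+\to\Ncal_+$ for Type~I factors \Mcal{}, \Ncal{} --- the positive-element analogue of \cite[Corollary~4.2]{Bo21} (for dimension at least $3$ this is in \cite{Bo21}; the two-dimensional case is in \cite{MN16}; and the one-dimensional case is trivial, since on $\cc_+=[0,\infty)$ the spectral order is the usual order and the only projection isomorphism of $P(\cc)$ is the identity). Applied to each $\f_j$ it provides a projection isomorphism $\tau_j:P(B(\cc^{m_j}))\to P(B(\cc^{n_{\pi^{-1}(j)}}))$ and a strictly increasing bijection $f_j:[0,\infty)\to[0,\infty)$ with $\f_j=\Theta_{\tau_j}\circ f_j$. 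Substituting back gives $\Phi((x_j)_{j\in\Lambda})=(\Theta_{\tau_{\pi(k)}}(f_{\pi(k)}(x_{\pi(k)})))_{k\in\Gamma}$.

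It then remains to check that $m_j=n_{\pi^{-1}(j)}$, which turns each $\tau_j$ into a projection automorphism of $P(B(\cc^{m_j}))$. This is forced by the mere existence of a projection isomorphism $\tau_j$: an order isomorphism in both directions maps maximal chains onto maximal chains, and the projection lattice of $B(\cc^m)$ has maximal chains with exactly $m+1$ elements; alternatively one may appeal to the fundamental theorem of projective geometry together with \cite{FL84}. I do not anticipate a genuine obstacle, since the statement is a formal consequence of Theorem~\ref{atomic algebras and positive operators} and the factor-level results already in the literature; the only point needing a little care is to quote the Type~I structure theorem in a form valid in all finite dimensions, in particular for $\dim=1$ and $\dim=2$.
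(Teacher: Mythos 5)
Your proposal is correct and follows essentially the same route as the paper, which simply combines Theorem~\ref{atomic algebras and positive operators} with the positive-part structure theorem for Type~I factors (\cite[Theorem~4.5]{Bo21}) and deduces $m_j=n_{\pi^{-1}(j)}$ from the existence of the projection isomorphisms, exactly as in the effect-lattice corollary. Your extra care about the one- and two-dimensional cases and the maximal-chain argument for $m_j=n_{\pi^{-1}(j)}$ only makes explicit what the paper leaves implicit.
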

\begin{proof}
	The corollary is a direct consequence of \cite[Theorem~4.5]{Bo21} and Theorem~\ref{atomic algebras and positive operators}.
\end{proof}

\begin{cor}
	Suppose that $\Mcal=\bigoplus_{j\in\Lambda}\Mcal_j$ and $\Ncal=\bigoplus_{k\in\Gamma}\Ncal_k$, where \AWf{}s $\Mcal_j$ and $\Ncal_k$ are not of  Type~I$_2$. If $\Phi:\Mcal_+\to\Ncal_+$ is a spectral order orthoisomorphism, then there are a bijection $\pi:\Gamma\to\Lambda$, a family $(\psi_j)_{j\in\Lambda}$ of \Jsi{}s $\psi_j:\Mcal_j\to\Ncal_j$, and a family $(f_j)_{j\in\Lambda}$ of strictly increasing bijections $f_j:[0,\infty)\to [0,\infty)$ such that 
	\[
	\Phi((x_j)_{j\in\Lambda})=(\psi_{\pi(k)}(f_{\pi(k)}(x_{\pi(k)})))_{k\in\Gamma}.
	\]	
\end{cor}
\begin{proof}
	The proof follows immediately from \cite[Corollary~5.4]{Bo21} and Theorem~\ref{atomic algebras and positive operators}.
\end{proof}

%%%%%%%%%%%%%%%%%%%%%%%%%%%%%%%%%%%%%%%%%%%%%%%%%%%%%%%%%%%%%%%%%%%%%%%%
%%%%%%%%%%%%%%%%%%%%%%%%%%%%%%%%%%%%%%%%%%%%%%%%%%%%%%%%%%%%%%%%%%%%%%%%
\section{Isomorphisms between spectral lattices}

Let us fix a notation. By $x^+$ and $x^-$ we denote, respectively, the positive part and the negative part of a self-adjoint element $x$ in an \AW{}. First, we recall a useful lemma proved in \cite{Bo21}.

\begin{lem}[{\cite[Lemma~5.5]{Bo21}}]\label{positive and negative parts}
	Let $\f:\Mcal\to\Ncal$ be a spectral order isomorphism between \AW{}s \Mcal{} and \Ncal{} with $\f(0)=0$. If $x\in\Mcal_{sa}$, then $\f(x)^+=\f(x^+)$ and $\f(x)^-=-\f(-(x^-))$.
\end{lem}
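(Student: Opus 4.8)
The plan is to deduce this from the analogous statement about positive and negative parts in the order-theoretic language. The crucial observation is that both $x^+$ and $x^-$ can be characterized purely in terms of the spectral order together with the point $0$: namely $x^+ = x \vee 0$ and $-(x^-) = x \wedge 0$. Indeed, the spectral family of $x\vee 0$ is $(\bigwedge_{\mu>\la}(E^x_\mu \vee E^0_\mu))_{\la\in\rr}$, and since $E^0_\mu = 0$ for $\mu<0$ and $E^0_\mu=\unit$ for $\mu\geq 0$, the right-continuity of $(E^x_\la)$ gives $E^{x\vee 0}_\la = E^x_\la$ for $\la\geq 0$ and $E^{x\vee 0}_\la = 0$ for $\la<0$, which is exactly the spectral family of $x^+$. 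A symmetric computation with the formula for infima gives $x\wedge 0 = -(x^-)$.

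With these two identities in hand, the proof is short. First I would note that $\f$, being a spectral order isomorphism with $\f(0)=0$, preserves all existing suprema and infima of bounded families (suprema and infima in the spectral lattice being determined by the order, as recalled before Lemma~\ref{infimum with central projection}), and in particular preserves binary joins and meets with $0$. Hence $\f(x^+) = \f(x\vee 0) = \f(x)\vee\f(0) = \f(x)\vee 0 = \f(x)^+$, which is the first claim. For the second, $\f(-(x^-)) = \f(x\wedge 0) = \f(x)\wedge\f(0) = \f(x)\wedge 0 = -(\f(x)^-)$, and multiplying by $-1$ gives $\f(x)^- = -\f(-(x^-))$, as required.

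The only point requiring a little care — and the step I expect to be the main (minor) obstacle — is justifying that a spectral order isomorphism between the full self-adjoint parts commutes with binary joins and meets. This is immediate from the fact that $x\vee y$ and $x\wedge y$, when they exist, are the least upper bound and greatest lower bound in $(\Mcal_{sa},\preceq)$; a bijection preserving $\preceq$ in both directions carries least upper bounds to least upper bounds and greatest lower bounds to greatest lower bounds. Here $x$ and $0$ are both in $\Mcal_{sa}$ and the join/meet exist (the spectral lattice is conditionally complete), so there is nothing to check beyond this generality. One could alternatively bypass the lattice language entirely and verify $\f(x^+)=\f(x)^+$ directly from spectral families using the monotonicity of $\Theta$-type maps, but the order-theoretic argument above is cleaner. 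Either way, the identities $x^+ = x\vee 0$ and $-(x^-) = x\wedge 0$ do the real work.
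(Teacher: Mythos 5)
Your overall strategy is sound, and it is the natural one: the paper itself does not prove this lemma (it imports it from \cite[Lemma~5.5]{Bo21}), so there is no in-paper argument to compare against, but reducing everything to the order-theoretic identities $x^+=x\vee 0$ and $-(x^-)=x\wedge 0$ and then invoking the fact that an order isomorphism preserves existing least upper bounds and greatest lower bounds is exactly the right way to make the statement self-contained. The second half of your argument (preservation of binary joins and meets with $0$, which exist because $\{x,0\}$ is bounded in the conditionally complete lattice $(\Mcal_{sa},\preceq)$) is correct as written.

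There is, however, one internal slip you should fix: the formula you display for the spectral family of $x\vee 0$, namely $\bigl(\bigwedge_{\mu>\la}(E^x_\mu\vee E^0_\mu)\bigr)_{\la\in\rr}$, is the paper's formula for the \emph{infimum} $x\wedge 0$, not the supremum. As written, your formula evaluates to $\unit$ for $\la\geq 0$ and to $E^x_\la$ for $\la<0$ (which is precisely the spectral family of $x\wedge 0=-(x^-)$), so it does not yield the conclusion you state for $x\vee 0$. The correct computation for the join uses $E^{x\vee 0}_\la=E^x_\la\wedge E^0_\la$, which gives $E^{x\vee 0}_\la=E^x_\la$ for $\la\geq 0$ and $E^{x\vee 0}_\la=0$ for $\la<0$, i.e.\ the spectral family of $x^+$; and the regularized-join formula you wrote then handles $x\wedge 0=-(x^-)$. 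With the two formulas restored to their proper places, both identities hold and the rest of your proof goes through unchanged.
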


We can now state the main result of this paper. 

\begin{theo}\label{atomic algebras and self-adjoint operators}
	Let $\Mcal=\bigoplus_{j\in\Lambda}\Mcal_j$ and $\Ncal=\bigoplus_{k\in\Gamma}\Ncal_k$, where $\Mcal_j$ and $\Ncal_k$ are \AWf{}s. If $\Phi:\Mcal_{sa}\to\Ncal_{sa}$ is a spectral order isomorphism, then there are a bijection $\pi:\Gamma\to\Lambda$ and a family $(\f_j)_{j\in\Lambda}$ of spectral order isomorphisms $\f_j$ from the self-adjoint part of $\Mcal_j$ onto self-adjoint part of $\Ncal_{\pi^{-1}(j)}$ such that 
	\[
	\Phi((x_j)_{j\in\Lambda})=(\f_{\pi(k)}(x_{\pi(k)}))_{k\in\Gamma}.
	\]	
\end{theo}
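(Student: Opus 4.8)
The plan is to mimic the structure of the proofs of Theorems~\ref{atomic algebras and effects} and~\ref{atomic algebras and positive operators}, now in the self-adjoint setting. The first ingredient is to reduce to the case $\Phi(0)=0$. For a general spectral order isomorphism $\Phi:\Mcal_{sa}\to\Ncal_{sa}$, the element $\Phi(0)$ is distributive in $(\Ncal_{sa},\preceq)$ since $0$ is distributive in $(\Mcal_{sa},\preceq)$; by Proposition~\ref{central elements}(i), $\Phi(0)$ is a central self-adjoint element of $\Ncal$, hence invertible-shiftable: the translation $y\mapsto y-\Phi(0)$ (or more carefully, a spectral order automorphism of $\Ncal_{sa}$ arising from an order isomorphism of $\rr$ on each central block that sends $\Phi(0)$ to $0$) is a spectral order automorphism of $\Ncal_{sa}$ that commutes with the direct-sum decomposition. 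Composing with it, we may and do assume $\Phi(0)=0$. Then by Lemma~\ref{positive and negative parts}, $\Phi$ restricts to a spectral order isomorphism $\Phi_+:\Mcal_+\to\Ncal_+$ and the negative part is governed by the spectral order isomorphism $x\mapsto -\Phi(-x)$ on $\Mcal_+$.

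Next I apply Theorem~\ref{atomic algebras and positive operators} to $\Phi_+$. This yields a bijection $\pi:\Gamma\to\Lambda$ and componentwise spectral order isomorphisms on positive parts; in particular $\Phi(\Zcal(\Mcal)_{sa})=\Zcal(\Ncal)_{sa}$ by Proposition~\ref{central elements}(i), and the central atomic projections $z_j$ from~(\ref{central elements in M}) are matched — up to strictly positive scalars and to the action on negative parts — with the $w_{\pi^{-1}(j)}$ from~(\ref{central elements in N}). The key structural point is then to show $\Phi(z_j\Mcal_{sa})=w_{\pi^{-1}(j)}\Ncal_{sa}$ for every $j\in\Lambda$. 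For $x=x^+-x^-\in z_j\Mcal_{sa}$ we have $x^+,x^-\in z_j\Mcal_+$; applying Lemma~\ref{positive and negative parts} together with the already-established facts $\Phi(z_j\Mcal_+)\subseteq w_{\pi^{-1}(j)}\Ncal_+$ (from the proof of Theorem~\ref{atomic algebras and positive operators}) and the analogous statement for the map $x\mapsto -\Phi(-x)$ gives $\Phi(x)^+=\Phi(x^+)\in w_{\pi^{-1}(j)}\Ncal_+$ and $\Phi(x)^-=-\Phi(-(x^-))\in w_{\pi^{-1}(j)}\Ncal_+$, whence $\Phi(x)=\Phi(x)^+-\Phi(x)^-\in w_{\pi^{-1}(j)}\Ncal_{sa}$; the reverse inclusion is symmetric via $\Phi^{-1}$. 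This exhibits a family $(\f_j)_{j\in\Lambda}$ of spectral order isomorphisms $\f_j:(\Mcal_j)_{sa}\to(\Ncal_{\pi^{-1}(j)})_{sa}$ with $\Phi(z_l(x_k)_{k\in\Lambda})=w_{\pi^{-1}(l)}(\f_{\pi(k)}(x_{\pi(k)}))_{k\in\Gamma}$.

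Finally I assemble the pieces by writing an arbitrary $(x_j)_{j\in\Lambda}\in\Mcal_{sa}$ as a supremum of its "blocks." Here one must be a little careful because $\Mcal_{sa}$ is only conditionally complete and the $z_l(x_j)_{j\in\Lambda}$ need not lie below a common bound if the $x_j$ are not uniformly bounded — but they are, since $(x_j)_{j\in\Lambda}\in\Mcal$ forces $\sup_j\norm{x_j}<\infty$, so $\bigvee_{l}z_l(x_j)_{j\in\Lambda}=(x_j)_{j\in\Lambda}$ exists; applying the analogue of Lemma~\ref{supremum and multiplication} for self-adjoint elements (suprema of $z_l x$ over orthogonal central $z_l$ multiply out, which follows from Lemma~\ref{supremum and multiplication} applied to $x^+$ and $x^-$ and Lemma~\ref{positive and negative parts}) and the fact that $\Phi$ preserves suprema of bounded families, one gets $\Phi((x_j)_{j\in\Lambda})=(\f_{\pi(k)}(x_{\pi(k)}))_{k\in\Gamma}$. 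The main obstacle I anticipate is the first reduction step: producing, in a way compatible with the direct-sum decomposition, a spectral order automorphism of $\Ncal_{sa}$ sending the central element $\Phi(0)$ to $0$, and verifying it indeed preserves the factor blocks; once $\Phi(0)=0$ is in hand, the rest is a routine transcription of the positive-part argument using Lemma~\ref{positive and negative parts} to handle signs.
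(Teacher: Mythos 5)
Your overall route is the same as the paper's (reduce to $\Phi(0)=0$, feed the positive part into Theorem~\ref{atomic algebras and positive operators}, use Lemma~\ref{positive and negative parts} to handle signs, then reassemble), but there is one genuine gap. When you apply Theorem~\ref{atomic algebras and positive operators} to $\Phi|_{\Mcal_+}$ you get a bijection $\pi$ with $\Phi(z_j\Mcal_+)\subseteq w_{\pi^{-1}(j)}\Ncal_+$; when you apply it to the \emph{different} spectral order isomorphism $x\mapsto -\Phi(-x)$ of $\Mcal_+$ onto $\Ncal_+$ you get, a priori, a \emph{different} bijection $\sigma$ with $-\Phi(-(x^-))\in w_{\sigma^{-1}(j)}\Ncal_+$. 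You silently write $\pi$ in both places. If $\sigma\neq\pi$, then for $x\in z_j\Mcal_{sa}$ the element $\Phi(x)=\Phi(x^+)+\Phi(-(x^-))$ would be supported on two distinct blocks and your conclusion $\Phi(z_j\Mcal_{sa})\subseteq w_{\pi^{-1}(j)}\Ncal_{sa}$ fails. The paper proves $\pi=\sigma$ by contradiction: if $\pi^{-1}(l)=\sigma^{-1}(k)$ for $k\neq l$, then Lemma~\ref{positive and negative parts} gives $\Phi(z_k-z_l)=\bigl(h_k(1)-g_l(1)\bigr)w_{\sigma^{-1}(k)}$, which lies in $\Ncal_+\cup\Ncal_-$ although $z_k-z_l\notin\Mcal_+\cup\Mcal_-$ --- contradicting $\Phi(\Mcal_\pm)=\Ncal_\pm$. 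You need some such argument; it is not automatic.

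A second, smaller problem is in your final assembly: the identity $\bigvee_{l}z_l\,(x_j)_{j\in\Lambda}=(x_j)_{j\in\Lambda}$ is \emph{false} for general self-adjoint $(x_j)$ when $|\Lambda|\geq 2$, because in the $m$-th component the spectral-order supremum of $\{x_m,0\}$ is $x_m\vee 0=x_m^{+}$, so $\bigvee_l z_l x=x^{+}$, not $x$. You do gesture at the fix (split into $x^{+}$ and $x^{-}$), and that is exactly what the paper does: it writes $\Psi((x_k))=\Psi((x_k^+))+\Psi(-(x_k^-))$ via Lemma~\ref{positive and negative parts} and only then applies Lemma~\ref{supremum and multiplication} to the two positive families (with an infimum, not a supremum, on the negative side). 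As written, though, your display is an incorrect identity rather than a careful reduction. One further remark: the paper also inserts a normalization step (replacing $\Phi$ by $(x_j)\mapsto\Phi((f_j^{-1}(x_j))_j)$ after checking uniform boundedness of the $f_j^{-1}$) so that $\Phi(\la z_j)=\la w_{\pi^{-1}(j)}$ before running the $z_j\wedge u$ computation; if you instead invoke the full conclusion of Theorem~\ref{atomic algebras and positive operators} (which already gives $\Phi(z_j\Mcal_+)=w_{\pi^{-1}(j)}\Ncal_+$), you can legitimately skip that normalization, which is a mild simplification --- but it does not repair either of the two issues above.
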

\begin{proof} 
By Proposition~\ref{central elements}, $\Phi$ restricts to a spectral order isomorphism from the atomic \AW{} $\Zcal(\Mcal)_{sa}$ onto the atomic \AW{} $\Zcal(\Ncal)_{sa}$. Combining Lemma~\ref{spectral order on direct sum} with \cite[Lemma~3.1]{Bo21}, we see that the map 
\[x\mapsto \Phi(x)-\Phi(0)\]
is a spectral order isomorphism from $\Mcal_{sa}$ onto $\Ncal_{sa}$ because $\Phi(0)$ is a central element and so $\Phi(0)=(\al_j\unit_{\Ncal_j})$ for some family $(\al_j)_{j\in\Lambda}$ of real numbers. Therefore, we can assume without loss of generality that $\Phi(0)=0$. Using this assumption, we have $\Phi(\Mcal_+)=\Ncal_+$ and $\Phi(\Mcal_-)=\Ncal_-$, where $\Mcal_-=-\Mcal_+$ and $\Ncal_-=-\Ncal_+$. By Theorem~\ref{atomic algebras and positive operators} and \cite[Theorem~4.5]{Bo21}, there are a family $(g_j)_{j\in\Lambda}$ of strictly increasing bijections $g_j:[0,\infty)\to[0,\infty)$ and a permutation $\pi:\Gamma\to\Lambda$ such that 
\[
\Phi(\la z_j)=g_j(\la)w_{\pi^{-1}(j)}
\]
for all $\la\in[0,\infty)$ and all $j\in\Lambda$, where $z_j$ and $w_k$ are elements defined in (\ref{central elements in M}) and (\ref{central elements in N}), respectively.

Set $\Psi(x)=-\Phi(-x)$, $x\in\Mcal_{sa}$. Then $\Psi:\Mcal_{sa}\to\Ncal_{sa}$ is a spectral order isomorphism from $\Mcal_{sa}$ onto $\Ncal_{sa}$ because the multiplication by $-1$ is order-reversing. Furthermore, $\Psi(0)=0$, $\Psi(\Mcal_+)=\Ncal_+$, and $\Psi(\Mcal_-)=\Ncal_-$. If $\la\leq 0$, then we conclude from the above discussion that $\Psi(\la z_j)=-g_j(-\la)w_{\pi^{-1}(j)}$ for each $j\in\Lambda$. If $\la\geq 0$, then we obtain from Theorem~\ref{atomic algebras and positive operators}, \cite[Theorem~4.5]{Bo21}, and $\Psi(\Zcal(\Mcal)_{sa})=\Zcal(\Ncal)_{sa}$
that there are a family $(h_j)_{j\in\Lambda}$ of strictly increasing bijections and a permutation $\sigma:\Gamma\to\Lambda$ such that $\Psi(\la z_j)=h_j(\la)w_{\sigma^{-1}(j)}$. We are going to show that $\pi=\sigma$. To prove this we suppose that $\pi\neq \sigma$. Then there are two different indices $k,l\in\Lambda$  such that $\pi^{-1}(l)=\sigma^{-1}(k)$. Applying Lemma~\ref{positive and negative parts},
	$$
	\Psi(z_k-z_l)=\Psi(z_k)+\Psi(-z_l)=h_k(1)w_{\sigma^{-1}(k)}-g_l(1)w_{\pi^{-1}(l)}=\left(h_k(1)-g_l(1)\right)w_{\sigma^{-1}(k)}.
	$$
	However, $\left(h_k(1)-g_l(1)\right)w_{\sigma^{-1}(k)}$ is in $\Ncal_+$ or $\Ncal_-$ which is a contradiction because $z_k-z_l$ does not belong to $\Mcal_+$ or $\Mcal_-$. Therefore, $\pi=\sigma$. As a consequence, $\Psi(\la z_j)=f_j(\la)w_{\pi^{-1}(j)}$, where 
	\[
	f_j(\la)=\begin{cases}
	&h_j(\la), \quad \mbox{if } \la\geq 0;\\
	&-g_j(-\la)\quad \mbox{if } \la< 0.
	\end{cases}
	\]
	Note that $f_j:\rr\to\rr$ is a strictly increasing bijection.
	
	If $(x_j)_{j\in\Lambda}\in\Mcal_{sa}$, then there is $c\in(0,\infty)$ such that $-c\unit_{\Mcal_j}\preceq x_j\preceq c\unit_{\Mcal_j}$ for each $j\in\Lambda$. Arguments used in the proof of Theorem~\ref{atomic algebras and positive operators} establish that 
	$\sup_{j\in\Lambda}f_j^{-1}(c)\leq \norm{\Psi^{-1}(c\unit_\Ncal)}$.	We can prove similarly that $\sup_{j\in\Lambda} -f_j^{-1}(-c)\leq \norm{\Psi^{-1}(-c\unit_\Ncal)}$. Put 
	\[
	\al=\max\left\{\norm{\Psi^{-1}(c\unit_\Ncal)},\norm{\Psi^{-1}(-c\unit_\Ncal)}\right\}.
	\]
	We deduce from $-c\unit_{\Mcal_j}\preceq x_j\preceq c\unit_{\Mcal_j}$ that 
	$$
	-\al\unit_\Mcal \preceq f_j^{-1}(-c)\unit_{\Mcal_j}\preceq f_j^{-1}(x_j)\preceq f_j^{-1}(c)\unit_{\Mcal_j}\preceq \al\unit_\Mcal
	$$
	for all $j\in\Lambda$. Accordingly, 
	$$
	\sup_{j\in\Lambda}\norm{f^{-1}_j(x_j)}\leq \norm{\al\unit_\Mcal}<\infty
	$$ 
	Thus $(x_j)_{j\in\Lambda}\mapsto \Psi((f^{-1}_j(x_j))_{j\in\Lambda})$ is a well defined spectral order isomorphism from $\Mcal_{sa}$ onto $\Ncal_{sa}$ and so we can assume without loss of generality that $f_j$ is the identity function for each $j\in\Lambda$. In other words, we shall suppose that 
	$$
	\Psi(\la z_{j})=\la w_{\pi^{-1}(j)}
	$$
	for all $\la\in\rr$ and all $j\in\Lambda$.
	
	To complete the proof of our assertion it is sufficient to show that there is a family $(\psi_j)_{j\in\Lambda}$ of spectral order isomorphisms $\psi_j$ from the self-adjoint part of $\Mcal_j$ onto self-adjoint part of $\Ncal_{\pi^{-1}(j)}$ such that 
	$$
	\Psi((x_j)_{j\in\Lambda})=(\psi_{\pi(k)}(x_{\pi(k)}))_{k\in\Gamma}.
	$$	
	for every $(x_j)_{j\in\Lambda}\in\Mcal_{sa}$. If $x\in z_j\Mcal_{sa}$, then $x^+$ and $x^-$ belong to $z_j\Mcal_{sa}$. We can write $x^+$ and $x^-$ in the form $x^+=\al u$ and $x^-=\beta v$ for some $\al,\beta\in(0,\infty)$ and $u,v\in z_j\Ecal(\Mcal)$. 
	It was pointed out in the proof of Theorem~\ref{atomic algebras and positive operators} that $\frac{1}{\al}\Psi(x^+)\in \Ecal(\Ncal)$. We can apply a similar reasoning to prove that $-\frac{1}{\beta}\Psi(-x^-)\in\Ecal(\Ncal)$. Therefore, we obtain from Lemma~\ref{infimum with central projection} and Lemma~\ref{positive and negative parts} that 
	\begin{align*}
		\Psi(x)&=\Psi(x)^+ - \Psi(x)^-=\Psi(x^+) + \Psi(-x^-)=\Psi(\al(z_j\wedge u))+\Psi(-\beta(z_j\wedge v))\\
		&=\Psi(\al z_j)\wedge \Psi(x^+)+\Psi(-\beta z_j)\vee\Psi(-x^-)\\
		&=\al\left[w_{\pi^{-1}(j)}\wedge \frac{1}{\al}\Psi(x^+)\right]-\beta\left[w_{\pi^{-1}(j)}\wedge\left(-\frac{1}{\beta}\Psi(-x^-)\right)\right]\\
		&= w_{\pi^{-1}(j)}\left[\Psi(x^+)+\Psi(-x^-)\right]=w_{\pi^{-1}(j)}\Psi(x).
	\end{align*}
	We also observe from analogous arguments that $\Psi^{-1}(y)\in z_j\Mcal_{sa}$ whenever $y\in w_{\pi^{-1}(j)}\Ncal_{sa}$. This means that $\Psi(z_j\Mcal_{sa})=w_{\pi^{-1}(j)}\Ncal_{sa}$ for all $j\in\Lambda$. Thus there exists a family $(\psi_j)_{j\in\Lambda}$ of spectral order isomorphisms from the self-adjoint part of $\Mcal_j$ onto the self-adjoint part of $\Ncal_{\pi^{-1}(j)}$ such that 
	\[
	\Psi(z_j(x_k)_{k\in\Lambda})=w_{\pi^{-1}(j)}(\psi_{\pi(k)}(x_{\pi(k)}))_{k\in\Gamma}
	\]
	whenever $(x_k)_{k\in\Lambda}\in\Mcal_{sa}$.  Taking into account Lemma~\ref{supremum and multiplication} together with Lemma~\ref{positive and negative parts}, we get
	\begin{align*}
		\Psi((x_k)_{k\in\Lambda})&= \Psi((x_k^+)_{k\in\Lambda})+\Psi(-(x_k^-)_{k\in\Lambda}) = \Psi(\bigvee_{j\in\Lambda} z_j (x_k^+)_{k\in\Lambda})+\Psi(-\bigvee_{j\in\Lambda} z_j (x_k^-)_{k\in\Lambda})\\
		&=\bigvee_{j\in\Lambda} \Psi( z_j (x_k^+)_{k\in\Lambda})+\bigwedge_{j\in\Lambda} \Psi(- z_j (x_k^-)_{k\in\Lambda})\\
		&=\bigvee_{j\in\Lambda} w_{\pi^{-1}(j)}(\psi_{\pi(k)}(x_{\pi(k)}^+))_{k\in\Gamma}-\bigvee_{j\in\Lambda} w_{\pi^{-1}(j)}(-\psi_{\pi(k)}(-x_{\pi(k)}^-))_{k\in\Gamma}\\
		%		&=& (\psi_{\pi(k)}(x_{\pi(k)}^+))_{k\in\Lambda}+(\psi_{\pi(k)}(-x_{\pi(k)}^-))_{k\in\Lambda}\\
		&= (\psi_{\pi(k)}(x_{\pi(k)})^{+}-\psi_{\pi(k)}(x_{\pi(k)})^-)_{k\in\Gamma}= (\psi_{\pi(k)}(x_{\pi(k)}))_{k\in\Gamma}.
	\end{align*}
	for all $(x_k)_{k\in\Lambda}\in\Mcal_{sa}$.
\end{proof}

For the sake of completeness, we state some consequences of the previous theorem. The first result is concerned with spectral order isomorphisms between spectral lattices of direct sum of full matrix algebras. The second assertion describes spectral order orthoisomorphisms between spectral lattices of direct sum of \AWf{}.

\begin{cor}
	Let $\Mcal=\bigoplus_{j\in \Lambda} B(\cc^{m_j})$ and $\Ncal=\bigoplus_{k\in \Gamma} B(\cc^{n_k})$, where $m_j$ and $n_k$ are natural numbers for each $j\in \Lambda$ and each $k\in \Gamma$. If $\Phi:\Mcal_{sa}\to\Ncal_{sa}$ is a spectral order isomorphism, then there are a bijection $\pi:\Gamma\to \Lambda$ with $m_{j}=n_{\pi^{-1}(j)}$ for all $j\in \Lambda$, a family $(f_j)_{j\in \Lambda}$ of strictly increasing bijections $f_j:\rr\to\rr$, and a family $(\tau_j)_{j\in \Lambda}$ of projection automorphisms $\tau_j:P(B(\cc^{m_j}))\to P(B(\cc^{m_j}))$ such that 
	\[
	\Phi((x_j)_{j\in \Lambda})=(\Theta_{\tau_{\pi(k)}}(f_{\pi(k)}(x_{\pi(k)})))_{k\in \Gamma}.
	\]
\end{cor}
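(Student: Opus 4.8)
The plan is to reduce this corollary to the already-proved Theorem~\ref{atomic algebras and self-adjoint operators} combined with the known structure theorem for spectral order isomorphisms between spectral lattices of Type~I factors. First I would apply Theorem~\ref{atomic algebras and self-adjoint operators} to the given spectral order isomorphism $\Phi:\Mcal_{sa}\to\Ncal_{sa}$. Since each $\Mcal_j=B(\cc^{m_j})$ and each $\Ncal_k=B(\cc^{n_k})$ is an \AWf{} (indeed a von Neumann factor of Type~I), the hypotheses are met, so there exist a bijection $\pi:\Gamma\to\Lambda$ and a family $(\f_j)_{j\in\Lambda}$ of spectral order isomorphisms $\f_j:B(\cc^{m_j})_{sa}\to B(\cc^{n_{\pi^{-1}(j)}})_{sa}$ with $\Phi((x_j)_{j\in\Lambda})=(\f_{\pi(k)}(x_{\pi(k)}))_{k\in\Gamma}$.

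Next I would invoke \cite[Theorem~4.5]{Bo21} (the description of spectral order isomorphisms between self-adjoint parts of Type~I \AWf{}s), applied to each $\f_j$ individually. That result yields, for each $j\in\Lambda$, a projection isomorphism $\tau_j':P(B(\cc^{m_j}))\to P(B(\cc^{n_{\pi^{-1}(j)}}))$ and a strictly increasing bijection $f_j:\rr\to\rr$ such that $\f_j(x)=\Theta_{\tau_j'}(f_j(x))$ for all $x\in B(\cc^{m_j})_{sa}$. The mere existence of the order isomorphism $\tau_j'$ between the projection lattices of $B(\cc^{m_j})$ and $B(\cc^{n_{\pi^{-1}(j)}})$ forces these two lattices (which are the subspace lattices of $\cc^{m_j}$ and $\cc^{n_{\pi^{-1}(j)}}$) to be isomorphic, hence $m_j=n_{\pi^{-1}(j)}$; this is exactly the dimension-matching step carried out in the proof of Corollary~\ref{spectral order isomorphisms on effects in finite dimension}. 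Once $m_j=n_{\pi^{-1}(j)}$ is established, $\tau_j'$ becomes a projection automorphism $\tau_j:P(B(\cc^{m_j}))\to P(B(\cc^{m_j}))$, and substituting $\f_{\pi(k)}=\Theta_{\tau_{\pi(k)}}\circ f_{\pi(k)}$ into the componentwise formula gives precisely the claimed expression $\Phi((x_j)_{j\in\Lambda})=(\Theta_{\tau_{\pi(k)}}(f_{\pi(k)}(x_{\pi(k)})))_{k\in\Gamma}$.

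There is essentially no obstacle here: the corollary is a straightforward concatenation of Theorem~\ref{atomic algebras and self-adjoint operators} with \cite[Theorem~4.5]{Bo21}, exactly parallel to how Corollary~\ref{spectral order isomorphisms on effects in finite dimension} follows from Theorem~\ref{atomic algebras and effects} and \cite[Corollary~4.2]{Bo21}. The only point needing a word of care is the matching of the dimensions $m_j=n_{\pi^{-1}(j)}$, which follows from the existence of a projection isomorphism between the two finite-dimensional subspace lattices; and one should note that the component maps $\f_j$ produced by Theorem~\ref{atomic algebras and self-adjoint operators} are genuine spectral order isomorphisms (not merely orthoisomorphisms or positive-part maps), so that \cite[Theorem~4.5]{Bo21} applies directly. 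Hence the proof reduces to: apply Theorem~\ref{atomic algebras and self-adjoint operators}, apply \cite[Theorem~4.5]{Bo21} componentwise, and read off the dimension constraint.

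\begin{proof}
By Theorem~\ref{atomic algebras and self-adjoint operators}, there are a bijection $\pi:\Gamma\to\Lambda$ and a family $(\f_j)_{j\in\Lambda}$ of spectral order isomorphisms $\f_j:B(\cc^{m_j})_{sa}\to B(\cc^{n_{\pi^{-1}(j)}})_{sa}$ such that
\[
\Phi((x_j)_{j\in\Lambda})=(\f_{\pi(k)}(x_{\pi(k)}))_{k\in\Gamma}.
\]
For each $j\in\Lambda$, the algebras $B(\cc^{m_j})$ and $B(\cc^{n_{\pi^{-1}(j)}})$ are \AWf{}s of Type~I, so by \cite[Theorem~4.5]{Bo21} there are a projection isomorphism $\tau_j:P(B(\cc^{m_j}))\to P(B(\cc^{n_{\pi^{-1}(j)}}))$ and a strictly increasing bijection $f_j:\rr\to\rr$ such that $\f_j(x)=\Theta_{\tau_j}(f_j(x))$ for all $x\in B(\cc^{m_j})_{sa}$. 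Since $\tau_j$ is an order isomorphism between the finite subspace lattices of $\cc^{m_j}$ and $\cc^{n_{\pi^{-1}(j)}}$, these lattices have the same height and hence $m_j=n_{\pi^{-1}(j)}$. Consequently each $\tau_j$ is a projection automorphism of $P(B(\cc^{m_j}))$, and substituting into the displayed formula yields
\[
\Phi((x_j)_{j\in\Lambda})=(\Theta_{\tau_{\pi(k)}}(f_{\pi(k)}(x_{\pi(k)})))_{k\in\Gamma},
\]
as claimed.
\end{proof}
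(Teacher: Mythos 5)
Your proof is correct and follows essentially the same route as the paper, which simply concatenates Theorem~\ref{atomic algebras and self-adjoint operators} with the componentwise structure theorem from \cite{Bo21}. One small correction: the relevant result for the self-adjoint (spectral lattice) case is \cite[Theorem~4.7]{Bo21}, not \cite[Theorem~4.5]{Bo21} — the latter is the positive-part version used elsewhere in the paper — though the content you describe (strictly increasing bijections $f_j:\rr\to\rr$ and projection isomorphisms) is the right one.
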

\begin{proof}
	It follows from \cite[Theorem~4.7]{Bo21} and Theorem~\ref{atomic algebras and self-adjoint operators}.
\end{proof}

\begin{cor}
	Suppose that $\Mcal=\bigoplus_{j\in\Lambda}\Mcal_j$ and $\Ncal=\bigoplus_{k\in\Gamma}\Ncal_k$, where \AWf{}s $\Mcal_j$ and $\Ncal_k$ are not of  Type~I$_2$. If $\Phi:\Mcal_{sa}\to\Ncal_{sa}$ is a spectral order orthoisomorphism, then there are a bijection $\pi:\Gamma\to\Lambda$, a family $(\psi_j)_{j\in\Lambda}$ of \Jsi{}s $\psi_j:\Mcal_j\to\Ncal_j$, and a family $(f_j)_{j\in\Lambda}$ of strictly increasing bijections $f_j:\rr\to\rr$ such that 
	\[
	\Phi((x_j)_{j\in\Lambda})=(\psi_{\pi(k)}(f_{\pi(k)}(x_{\pi(k)})))_{k\in\Gamma}.
	\]	
\end{cor}
\begin{proof}
	This follows directly from \cite[Corollary~5.7]{Bo21} and Theorem~\ref{atomic algebras and self-adjoint operators}.
\end{proof}
%%%%%%%%%%%%%%%%%%%%%%%%%%%%%%%%%%%%%%%%%%%%%%%%%%%%%%%%%%%%%%%%%%%%%%%%
%%%%%%%%%%%%%%%%%%%%%%%%%%%%%%%%%%%%%%%%%%%%%%%%%%%%%%%%%%%%%%%%%%%%%%%%
\section*{Acknowledgement}
This work was supported by the project OPVVV Center for Advanced Applied Science CZ.02.1.01/0.0/0.0/16\_019/0000778. 
%%%%%%%%%%%%%%%%%%%%%%%%%%%%%%%%%%%%%%%%%%%%%%%%%%%%%%%%%%%%%%%%%%%%%%%%
%%%%%%%%%%%%%%%%%%%%%%%%%%%%%%%%%%%%%%%%%%%%%%%%%%%%%%%%%%%%%%%%%%%%%%%%

\end{document}